\newtheorem{thm}{Theorem}[section]
\newtheorem{cor}[thm]{Corollary}
\newtheorem{lem}[thm]{Lemma}
\theoremstyle{definition}
\theoremstyle{remark}
\newtheorem{rem}[thm]{Remark} 
\numberwithin{equation}{section}
\newcommand{\C}{\mathbb{C}}
\newcommand{\comp}{\mathrm{comp}}
\newcommand{\iD}{\mathit{\Delta}}
\newcommand{\Id}{I}
\newcommand{\iPsi}{\mathit{\Psi}}
\begin{document}

\title[On non-coercive mixed problems]
      {On non-coercive mixed problems for parameter-dependent elliptic operators\footnote{This is a preprin version of the paper published in Mathematical Communications, 20 (2015), 131-150}}


\author{A. Polkovnikov}

\address[Alexander Polkovnikov]
        {Siberian Federal University
\\
         Institute of Mathematics and Computer Science
\\
         pr. Svobodnyi 79
\\
         660041 Krasnoyarsk
\\
         Russia}

\email{paskaattt@yandex.ru}


\author{A. Shlapunov}

\address[Alexander Shlapunov]
        {Siberian Federal University
\\
         Institute of Mathematics and Computer Science
\\
         pr. Svobodnyi 79
\\
         660041 Krasnoyarsk
\\
         Russia}

\email{ashlapunov@sfu-kras.ru}

\date{June 11, 2014}

\subjclass[2010]{Primary 35B25; Secondary 35P10}

\keywords{Mixed problems,
          non-coercive boundary conditions,
          parameter dependent elliptic operators,
          root functions
            }

\begin{abstract}
We consider a (generally, non-coercive) mixed boundary value problem in a bounded 
domain $D$ of ${\mathbb R}^n$ for a second order parameter-dependent elliptic differential 
operator $A (x,\partial, \lambda)$ with complex-valued essentially bounded measured 
coefficients and complex parameter $\lambda$. The differential operator is assumed to be of 
divergent form in $D$, the boundary operator $B (x,\partial)$ is of Robin type with possible 
pseudo-differential components on $\partial D$. The boundary of $D$ is  assumed to be a 
Lipschitz surface. Under these assumptions the pair $(A (x,\partial, \lambda),B)$ 
induces a  holomorphic family of  Fredholm operators $L(\lambda): H^+(D) \to H^- (D)$ in 
suitable Hilbert spaces $H^+(D)$ ,  $H^- (D)$  of Sobolev type. If the argument of the 
complex-valued multiplier of the parame\-ter in $A (x,\partial, \lambda)$ is continuous and the 
coefficients related to second order derivatives of the operator are smooth then we prove that 
the operators $L(\lambda)$ are conti\-nu\-ously invertible for all $\lambda$ with sufficiently 
large modulus $|\lambda|$ on each ray on the complex plane $\mathbb C$ where the 
differential operator $A (x,\partial, \lambda)$ is parameter-dependent elliptic. We  also 
describe reasonable conditions for  the system of root functions related to the family 
$L (\lambda)$ to be (doubly) complete in the spaces $H^+(D)$, $H^- (D)$ 
and the Lebesgue space $L^2 (D)$. 
\end{abstract}

\maketitle


\section*{Introduction}
\label{s.Intr}

The notion of a parameter-dependent elliptic operator  provides a useful 
link between the theories of boundary value problems for parabolic and elliptic operators 
(see, for instance, \cite{AgraVi64}).  Investigating a boundary  value problem 
for  parameter-dependent elliptic operator $A (x,\partial, \lambda)$ on a ray in the complex 
plane, first one aims to prove the continuous invertibility in proper functional spaces 
$H^+(D)$, $ H^- (D)$ of the corresponding family $L (\lambda): H^+(D) \to H^- (D)$  of 
the operators for  all $\lambda$ with sufficiently large modulus $|\lambda|$ on the ray (see 
\cite{AgraVi64},  \cite{DMV01}, \cite{DV02}, \cite{Pal96}). 
The next step is to prove the (multiple) completeness of the 
corresponding root functions associated with the parameter-dependent family (see for 
instance \cite{GokhKrei69}, \cite{Keld51}, \cite{Mar86}, 
\cite{Yak90}). Actually, this provides 
a justification for application of Galerkin type methods and numerical solution of the 
problem. For elliptic (coercive) problems the results of this type are well known. The 
investigation is usually based on the classical methods of functional analysis and the theory of 
partial differential equations  (see  \cite{Agmo62}, \cite{Brow53}, \cite{DunfSchw63}, 
\cite{GokhKrei69}, \cite{Keld51}, \cite{Mar86} and many others). 
For domains with smooth boundaries, 
the standard Shapiro-Lopatinsky conditions with parameter and their generalizations 
are usually imposed  (see \cite{AgraVi64}, \cite{DMV01}, \cite{DV02}). It is appropriate mention here that the spectral theory in non-smooth 
domains usually depends upon hard analysis near singularities on the boundary (see, for 
instance, \cite{Agra11a}, 
\cite{Tark06}). 

Recently the classical approach was adapted for investigation of spectral proper\-ties 
of non-coercive mixed problems for strongly  elliptic operators in Lipschitz domains (see 
\cite{PolkShla13}, \cite{ShlTark12}). An essential part of the approach is the analysis 
in spaces of negative smoothness. We successfully apply this method for studying non-
coercive boundary value problems for the parameter-dependent elliptic operators with 
complex coefficients in Lipschitz domains in the case where the argument of 
the complex-valued multiplier of the parameter in $A (x,\partial, \lambda)$ is continuous 
and the coefficients related to second order derivatives of the operator are smooth.

An example related to a non-coercive mixed problems for strongly elliptic 
two-dimensional Lam\'e system is considered.

\section{A Fredholm holomorphic family of mixed problems}
\label{s.FrFam}

Let $D$ be a bounded domain in Euclidean space ${\mathbb R}^n$ with Lipschitz boundary 
$\partial D$, i.e. the surface $\partial D$ is locally the graph of a Lipschitz function.

We consider complex-valued functions defined in the domain $D$. We write $L^q (D)$ for 
the space of all (equivalence classes of) measurable functions $u$ in $D$, such that the 
Lebesgue integral of $|u|^q$ over $D$ is finite. 
As usual, this scale continues to include the case $q = \infty$, too.
As usual, we denote by $H^{1} (D)$ the
Sobolev  space and by $H^s (D)$, $0 <s<1$ the Sobolev-Slobodetskii spaces.

Consider a second order differential operator 
$$
   A  (x,\partial, \lambda) u 
 = - \sum_{i, j = 1}^n \partial_i (a_{i,j} (x) \partial _j u)
   + \sum_{j = 1}^n a_j (x) \partial_j u
   + a_0 (x) u + E ( \lambda) u
$$
of divergence form in the domain $D$ with a complex parameter $\lambda$; 
here $x = (x_1, \dots , x_n)$  are the coordinates in 
${\mathbb R}^n$ and $\partial_j = \frac{\partial }{\partial x_j}$ and 
$$
 E (\lambda) u = 
\lambda  \left( \sum_{j = 1}^n a^{(1)}_j  (x) \partial_j u +  a_0^{(1)} (x) u\right) + 
\lambda ^2 a_0^{(2)} (x) u . 
$$
The coefficients $a_{i,j}$, $a_j$, $a_j ^{(1)}$, $a_0 ^{(1)}$, 
$a_0 ^{(2)} $ are assumed to be complex-valued functions of class $L^\infty (D)$.

We suppose that the matrix
$ {\mathfrak A} (x) =  \left( a_{i,j} (x) \right)_{\substack{i = 1, \ldots, n \\
                                         j = 1, \ldots, n}}$
is Hermitian and satisfies
\begin{equation}
\label{eq.ell.positive}
   \sum_{i,j=1}^n a_{i,j} (x) \overline{w}_i w_j
 \geq 0 \mbox{ for all } (x,w) \in \overline{D} \times \C^n, 
\end{equation}
\begin{equation}
\label{eq.ell}
   \sum_{i,j=1}^n a_{i,j} (x) \xi_i \xi_j
 \geq
   m_0\, 
	|\xi|^2 \mbox{ for all } (x,\xi) \in \overline{D} \times ({\mathbb R}^n \setminus \{ 0 \}),
\end{equation}
where $m_0$ is a positive constant independent of $x$ and $\xi$. Estimate (\ref{eq.ell}) is 
nothing but the statement that the operator $A (x,\partial,0)$ is strongly elliptic. It should be 
noted that,   since the coefficients of the operator and the functions under consideration are
complex-valued, the matrix ${\mathfrak A} (x)$ can be degenerate. In particular inequalities 
(\ref{eq.ell.positive}) and (\ref{eq.ell}) are weaker than the (strong) coerciveness of the 
Hermitian form, i.e. the existence of a constant $m_0$ such that
\begin{equation}
\label{eq.coercive.strong}
   \sum_{i,j=1}^n a_{i,j} (x)\, \overline{w}_i w_j
 \geq
   m_0\, |w|^2
\end{equation}
for all $(x,w) \in \overline{D} \times ({\mathbb C}^n \setminus \{ 0 \})$.

We consider the following Robin type boundary operator  
$$
B = b_1 (x) \sum_{i,j=1}^n a_{i,j} (x)\, \nu_i \partial_j + \partial_\tau +  B_0  
$$
where  $ b_1$ is a bounded function  on $\partial D$, $\nu (x) = (\nu_1 (x), \ldots, \nu_n (x))$
is the unit outward normal vector of $\partial D$ at $x \in \partial D$, $\partial_\tau = 
\sum_{j=1}^n \tau_j (x)\partial_j$ is the tangential derivative  with a tangential field $\tau = 
(\tau_	1, \dots, \tau_n)$ on $\partial D$ and  $B_0$  is a densely defined linear operator in
$L^2 (\partial D)$ of ``order'' does not exceeding $1$. The function $b_1 (x)$ is allowed 
to vanish on an open connected subset $S$ of $\partial D$ with piecewise smooth boundary 
$\partial S$ and the vector $\tau$ vanishes identically on $S$. 

To specify the operator $B_0$, fix a number  $0 \leq \rho \leq 1/2$ and a
bounded linear operator $\Psi : H^{\rho} (\partial D) \to L^{2} (\partial D)$.
The range of $\rho$ is motivated by trace and duality arguments.
We will consider operator $B_0$ of the following form 
$$
B_0 = \chi_S u + b_1 \left(\iPsi^\ast \iPsi (u) +  \delta B_0 \right) 
$$
where $\chi_S$ is the characteristic function of the
set $S$ on $\partial D$, 
$\Psi ^*: L^{2} (\partial D) \to H^{\rho} (\partial D)$ 
is the adjoint operator for $\Psi$  and $\delta B_0$ is a ``low order'' perturbation that 
we will describe later.

For $\rho=0$ a typical operator $\Psi$ is a zero order differential operator, 
i.e. it is given by  $\Psi u = \psi u$,
where $\psi$ is a function on $\partial D$ locally bounded away from $\partial S$.
Then $(\Psi^\ast \Psi u) (x)= |\psi (x)|^2 u (x)$ is invertible provided that 
$|\psi (x)| \geq c > 0$. If $\partial D$ is $C^2\,$-smooth then a model operator $\Psi$ is
   $\Psi = (1 + \iD_{\partial D})^{\rho/2}$
where $\iD_{\partial D}$ is the Laplace-Beltrami operator on the boundary.

Consider the following family of boundary value problems. 
Given data  $f$ in $D$ and $u_0$ on $\partial D$, find a distribution $u$ in $D$
which satisfies
\begin{equation}
\label{eq.SL}
   \left\{
\begin{array}{rclcl}
     A (x,\partial, \lambda) u
   & =
   & f
   & \mbox{in}
   & D,
\\
     B (x,\partial) u
   & =
   & u_0
   & \mbox{at}
   & \partial D.
\end{array}
   \right.
\end{equation}
 
If $\lambda=0$ and $\Psi$ is given by the multiplication on a function, this is a well known 
mixed problem of Zaremba type (see \cite{Zare10}). It can be handled in a standard way in 
Sobolev type spaces associated  with Hermitian forms or in H\"older spaces and Sobolev 
spaces using the potential methods, (for the coercive case see \cite{Zare10}, \cite{LiMa72}, 
\cite{LadyUral73}, \cite{Gris85} and elsewhere). In the non-coercive case the methods  
should be more subtle (see, for instance, \cite{ADN59}, \cite{ShlTark12}) because of the 
lack of regularity of its solutions  near the boundary of the domain. 

In \cite{ShlTark12} the method, involving  non-negative  Hermitian forms,  was adopted 
to study problem (\ref{eq.SL}) in non-coercive cases with a zero order differential operator 
$\Psi$. Namely,  denote by $C^1 (\overline D,S)$ the subspace of $C^1 (\overline D)$ 
consisting of those functions whose restriction to the boundary vanishes on $\overline S$. Let 
$H^1 (D,S)$ be the closure of $C^1 (\overline D,S)$ in $H^1 (D)$. This space is Hilbert 
under the induced norm. Since on $S$ the boundary operator reduces to $B = \chi_S$ and $
\chi_S (x) \neq 0$ for $x \in S$, the functions of $H^1 (D)$ satisfying $Bu = 0$ on $\partial 
D$ belong to $H^1 (D,S)$. 

Split $a_0$ into two parts
$
 a_0 = a_{0,0} + \delta a_0,
$ 
where    $a_{0,0}$ is a non-negative bounded function in $D$.  
Then, under reasonable assumptions, the Hermitian form
$$
   (u,v)_{+}
 =  \int_{D} \sum_{i,j=1}^n a_{i,j} \partial_j u \overline{\partial_i v}\, dx
 + (a_{0,0} u, v)_{L^2 (D)}
 +  ( \Psi (u), \Psi (v) )_{L^{2} (\partial D)}
$$
defines a scalar product on  $H^1 (D,S)$. 
Denote by $H^{+} (D)$ the completion of the space $H^1 (D,S)$
with respect to the corresponding norm $\|\cdot\|_+$. 

To study the problem (\ref{eq.SL}) we need an embedding theorem for the space $H^+ (D)$.

\begin{thm}
\label{t.emb.half}
Let the coefficients $a_{i,j}$ be $C^\infty$ in a neighbourhood $X$ of the closure of $D$, 
inequalities (\ref{eq.ell.positive}), (\ref{eq.ell}) hold and
there is a constant $c _1> 0$, such that
\begin{equation}
\label{eq.b}
   \| \Psi u \|_{L^2 (\partial D)}
 \geq
   c_1\, \| u \|_{H^\rho (\partial D)} \mbox{ for all } u \in H^1 (\partial D, S).
\end{equation}
If there is a positive constant $c_2$, such that
\begin{equation}
\label{eq.a}
   a_{0,0} \geq c_2 \mbox{ in } D
\end{equation}
or  the operator $A$ is strongly elliptic in a neighbourhood $X$ of $\overline D$
and
\begin{equation}
\label{eq.aa}
   \int_{X} \sum_{i,j=1}^n a_{i,j} \partial_j u \overline{\partial_i u}\, dx
 \geq
   m_1\, \| u \|^2_{L^2 (X)}
\end{equation}
for all $u \in C^\infty_{\comp} (X)$, with $m_1 > 0$ a constant independent of $u$ 
then the space $H^+ (D)$ is continuously embedded into $H^{s} (D)$ where $s$  
is given by
\begin{equation}
\label{eq.r+half}
   s= \left\{
   \begin{array}{lll}
   1/2-\epsilon\ \mbox{with}\ \epsilon > 0,
 & \mbox{if}
 & \rho=0,
\\
   1/2,
 & \mbox{if}
 & \rho=0\ \ \mbox{and}\ \ \partial D \in C^2,
\\
   1/2+\rho,
 & \mbox{if}
 & 0 < \rho \leq 1/2.
\end{array}
   \right.
\end{equation}
\end{thm}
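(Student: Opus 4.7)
The plan is to prove, by density, the estimate $\|u\|_{H^s(D)} \le C\,\|u\|_+$ for every $u\in H^1(D,S)$. The argument splits into three stages: $L^2$-control, interior $H^1$-control, and a boundary-lifting step. For the first, under (\ref{eq.a}) the bound $c_2\|u\|_{L^2(D)}^2 \le (a_{0,0}u,u)_{L^2(D)} \le \|u\|_+^2$ is immediate, while under (\ref{eq.aa}) I would use a variational argument on $X$ exploiting both the strong ellipticity of $A$ in $X$ and the vanishing of $u$ on $S$: one compares $u$ against a suitable $H^1_0(X)$-extension and invokes the spectral gap encoded in (\ref{eq.aa}).

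For the interior $H^1$-regularity, take $D'\subset\subset D$ and $\varphi\in C^\infty_{\comp}(D)$ with $\varphi\equiv 1$ near $\overline{D'}$. G{\aa}rding's inequality for the strongly elliptic principal part (valid by (\ref{eq.ell}) and the $C^\infty$-smoothness of the $a_{i,j}$), applied to $\varphi u$, gives a bound of the form $c\|\varphi u\|_{H^1(D)}^2 \le \|u\|_+^2 + (\text{commutator terms}) + C\|u\|_{L^2(D)}^2$. Absorbing the commutator terms, which involve only first-order derivatives of $u$ multiplied by $\nabla\varphi$, via Cauchy--Schwarz and combining with the $L^2$-control of the first step yields $\|u\|_{H^1(D')}\le C_{D'}\|u\|_+$.

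For the global $H^s$-regularity, hypothesis (\ref{eq.b}) provides $\|u|_{\partial D}\|_{H^\rho(\partial D)}\le c_1^{-1}\|u\|_+$. Using a partition of unity subordinate to a boundary atlas, I localize $u$ near $\partial D$ in a chart where $\partial D$ is a Lipschitz graph. In tangential Fourier variables, the trace is controlled in $H^\rho$, while the finiteness of the form $\int_D\sum a_{i,j}\partial_j u\,\overline{\partial_i u}$ provides weighted control in the normal direction. Combining these via Plancherel together with a Poisson-type representation for the principal part lifts the boundary trace to $H^{1/2+\rho}$-regularity of $u$ in the chart; the boundary-regularity theory yields the full $1/2$-gain when $\partial D\in C^2$ or when $\rho>0$, but only $1/2-\epsilon$ on Lipschitz $\partial D$ when $\rho=0$. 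Patching the local estimates with the interior $H^1$-bound gives $\|u\|_{H^s(D)}\le C\|u\|_+$ with $s$ as in (\ref{eq.r+half}).

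The main obstacle is the last stage: the non-coercivity of the Hermitian form---which arises because for complex-valued $a_{i,j}$ the stronger inequality (\ref{eq.coercive.strong}) may fail even though (\ref{eq.ell}) holds---prevents a direct G{\aa}rding estimate up to the boundary, so the gain of $1/2$ in Sobolev smoothness must be extracted from the boundary trace via (\ref{eq.b}) rather than from the form. The $\epsilon$-loss on Lipschitz $\partial D$ for $\rho=0$ is unavoidable, reflecting the limits of boundary regularity on non-smooth boundaries with merely $L^2$ boundary data; the other two cases in (\ref{eq.r+half}) eliminate this obstruction either by smoothing the boundary or by strengthening the trace.
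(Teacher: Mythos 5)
The paper does not actually prove Theorem \ref{t.emb.half}: it refers to the proof of Theorem 2.5 in \cite{ShlTark12}, which proceeds by fixing a smooth factorization ${\mathfrak A}={\mathfrak D}^{*}{\mathfrak D}$, observing that $\|u\|_{+}^{2}$ controls $\|{\mathfrak D}\nabla u\|_{L^2(D)}^{2}$, $\|u\|_{L^2(D)}^{2}$ and, via (\ref{eq.b}), $\|u\|_{H^\rho(\partial D)}^{2}$, and then invoking regularity up to the boundary for the Dirichlet problem for the overdetermined elliptic first-order operator ${\mathfrak D}\nabla$ (Jerison--Kenig type results in the Lipschitz case). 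Your first two stages are compatible with that scheme and essentially sound: the $L^2$ bound under (\ref{eq.a}) is immediate, and the interior $H^1$ bound does follow from G{\aa}rding's inequality, provided the cross terms are estimated by the Cauchy--Schwarz inequality for the non-negative Hermitian form ${\mathfrak A}$ rather than for the Euclidean one --- at that stage you control $({\mathfrak A}\nabla u,\nabla u)_{L^2}$ but not $\|\nabla u\|_{L^2}$, so a bound of the cross terms by ``first-order derivatives of $u$ times $\nabla\varphi$'' is not yet available.

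The genuine gap is in the boundary step. You assert that the finiteness of $\int_D\sum a_{i,j}\partial_j u\,\overline{\partial_i u}\,dx$ ``provides weighted control in the normal direction''. In the non-coercive case this is false: for the model operator $4\vartheta\,\overline\partial^{*}\overline\partial$ of Section 4 the form vanishes identically on the infinite-dimensional space of holomorphic functions, so near $\partial D$ it gives no control in any direction at all; your own closing paragraph concedes that the full $1/2$-gain must be extracted from (\ref{eq.b}), which contradicts this step. What is actually needed, and what the proposal never formulates, is that $u$ satisfies the overdetermined elliptic first-order system ${\mathfrak D}\nabla u=f$ with $\|f\|_{L^2(D)}\le\|u\|_{+}$, together with $u\in L^2(D)$ and $u|_{\partial D}\in H^{\rho}(\partial D)$; the passage from these three facts to $u\in H^{s}(D)$ with $s$ as in (\ref{eq.r+half}) is the entire content of the theorem, and it rests on boundary regularity for elliptic Dirichlet problems in Lipschitz domains (whence the $\epsilon$-loss when $\rho=0$), not on a Plancherel computation in a boundary chart. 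A secondary gap: under alternative (\ref{eq.aa}) a function $u\in H^1(D,S)$ vanishes only on $S$, so it does not extend by zero to an element of $C^{\infty}_{\mathrm{comp}}(X)$ and (\ref{eq.aa}) cannot be applied directly; any comparison extension must be controlled through the trace, hence again through $\|\Psi u\|_{L^2(\partial D)}$, and the proposal does not say how.
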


\begin{proof} 
It  is similar to the proof of 
\cite[Theorem 2.5]{ShlTark12} corresponding to the case where $\rho=0$ and 
$\Psi$ is given by the multiplication on a function. 
\end{proof}

Of course, under the coercive estimate (\ref{eq.coercive.strong}), the space $H^+ (D)$ is  
continuously em\-bed\-ded into $H^{1} (D)$. However, in general, the embedding, described 
in Theorem \ref{t.emb.half} is rather sharp (see \cite[Remark  5.1]{ShlTark12}. and 
\S \ref{s.exmp}  below). In particular, if $\rho=0$ then it may happens that the space 
$H^+ (D)$ can not be embedded into $H^{1/2+\epsilon} (D)$ with any $\epsilon>0$. 
Thus the operator $\Psi$ is introduced here in order to improve, if necessary, the smoothness 
of elements of $H^+ (D)$ in the non-coercive case.

In order to pass to the generalized setting of the mixed problem we need 
that all the derivatives $\partial_j u$ belong to $L^2(D)$ for an element 
$u \in H^+ (D)$, at least if $s\leq 1/2$ in Theorem \ref{t.emb.half}. 
However if $0<s<1$  then the absence of 
coerciveness does not allow this. To cope with this difficulty we note that 
the operator $ \sum_{i,j=1}^n \partial _i (a_{i,j} \partial_j \cdot )$ admits a 
factorisation, i.e. there is an $(m \times n)\,$-matrix ${\mathfrak D} (x)  = 
\left( {\mathfrak D}_{i,j} (x) \right)_{\substack{i = 1, \ldots, m \\
                                         j = 1, \ldots, n}}$ of bounded
functions in $D$, such that
\begin{equation}
\label{eq.factor}
 ({\mathfrak D} (x))^\ast {\mathfrak D} (x)  =  {\mathfrak A} (x) 
\end{equation}
for almost all $x \in D$. For example, one 
could take the standard  non-negative selfadjoint square root  
${\mathfrak D} (x)= \sqrt{{\mathfrak A} (x)}$ of the matrix ${\mathfrak A} (x)$. 
Then 
$$
\sum_{i,j=1}^n a_{i,j} \partial_j u \overline{\partial_i v}\ 
 =  ({\mathfrak D} \nabla v ) ^* {\mathfrak D}  \nabla u  = 
 \sum_{k=1}^m \overline{{\mathfrak D}_k v}\, {\mathfrak D}_k u,
$$
for all smooth functions $u$ and $v$ in $\overline D$, where
   $\nabla u$ is thought of as $n\,$-column with entries
   $\partial_1 u, \ldots, \partial_n u$,
and $   {\mathfrak D}_k u:= \sum_{l=1}^n {\mathfrak D}_{k,l} (x) \partial_l u$, 
$k = 1, \ldots, m$.
Then, by the definition of the space $H^+(D)$, 
any term $\tilde a_k (x) {\mathfrak D}_k u$, $k = 1, \ldots, m$, belongs to 
$L^2 (D)$ if $u \in H^+(D)$ and $\tilde a_k \in L^\infty (D)$.
Thus, if $0<s<1$ then we may confine ourselves 
with first order summands of the form
 $$ 
\sum_{k=1}^m \tilde{a}_k (x) {\mathfrak D}_k \mbox{ and } \sum_{k=1}^m 
\tilde{a}^{(1)}_k (x) {\mathfrak D}_k
$$
instead of
$   
\sum_{j=1}^n a_{j} (x) \partial_j $ 
and 
$\sum_{k=1}^n a^{(1)}_j (x) \partial_j$.  
For this purpose, we fix a factorization ${\mathfrak D} (x) $ of the matrix
 ${\mathfrak A} (x) $ and functions $\tilde a_k \in L^\infty (D)$, $\tilde a^{(1)}_k \in 
L^\infty (D)$, $k = 1, \ldots, m$.

These considerations allow to handle problem (\ref{eq.SL}) with the use of the standard tools 
of functional analysis. Indeed, let  $H^{-} (D)$ stand  for the completion of space 
$H^{+} (D)$ with respect to the norm 
$$
   \| u \|_{-}
 = \sup_{\substack{v \in H^{+}   (D) \\ v \ne 0}}
   \frac{|(v,u)_{L^2 (D)}|}{\| v \|_{+}}.
$$
It is the dual space for the space $H^{+} (D)$ with respect to the pairing 
$$
\langle  \cdot,\cdot \rangle : H^{-} (D) \times H^{+} (D) \to {\mathbb C} 
$$ 
induced by the scalar product $(\cdot,\cdot)_{L^2 (D)}$, 
$$
\langle  u, v \rangle = \lim_{\nu \to + \infty} (u_\nu,v)_{L^2 (D)}, \quad 
u \in H^{-} (D), v \in H^{+} (D)
$$ 
where $\{ u_\nu \} \subset H^{+} (D)$ converges to $u$ in $H^{-} (D)$, see 
\cite{LiMa72}. Note that under hypothe\-sis of Theorem \ref{t.emb.half}, the natural 
embedding $\iota : H^{+} (D) \to L^2 (D)$ is continuous; it is compact if (\ref{eq.b}) holds. 
Let $\iota ^\prime:  L^2 (D) \to H^{+} (D) $ stand for the adjoint map for $\iota$ with 
respect to the pairing $\langle \cdot , \cdot \rangle$, 
i.e. 
$$
\langle  \iota ^\prime u , v \rangle = (u, \iota v)_{L^2 (D)}
\mbox{ for all } u \in L^{2} (D), v \in H^{+} (D). 
$$

Now an integration by parts leads to a weak formulation of problem (\ref{eq.SL}):
given $f \in H^- (D)$, find $u \in H^+ (D)$, such that
\begin{equation}
\label{eq.SL.w}
  (u,v)_+ + \Big(\big(\sum_{j=k}^m \tilde a_k {\mathfrak D} _k  + \delta a_0 + 
E(\lambda) \big)u ,v  \Big)_{L^2 (D)} 	 +  \left( (b_1^{-1} \partial_\tau + 
	\delta B_0) u, v \right)_{L^2 (\partial D \setminus S)} = <f,v>
\end{equation}
for all $v \in C^1 (\overline D, S)$.

By the Cauchy inequality, if 
$$
\left| \left( (b_1^{-1} \partial_\tau + 
	\delta B_0) u, v \right)_{L^2 (\partial D \setminus S)} \right| \leq c \, \|u\|_+ \, \|u\|_+
$$
with a constant $c>0$ independent on  $u,v\in H^{+} (D)$, then 
(\ref{eq.SL.w}) induces a holomorphic (with respect to 
$\lambda \in {\mathbb C}$) family $L(\lambda): H^{+} (D) \to H^{-} (D) $ 
 of bounded linear operators. 

Denote by $L_0$ the operator $L(0)$ in the case 
where $\tau \equiv 0$, 
$\delta B_0\equiv 0$, $\delta a_0\equiv 0$, $\tilde a_k \equiv 0$, $k=1,\dots m$.
According to \cite[Lemma 2.6]{ShlTark12}, the operator $L_0: H^{+} (D) \to H^{-} (D) $ is 
continuously invertible and $\|L_0\|=\|L_0^ {-1}\|=1$. Then we can 
consider each operator $L(\lambda)$, $\lambda \in {\mathbb C}$, as a perturbation of $L_0$.

Actually, it is convenient to endow the space $H^{-} (D)$ with the 
scalar product 
\begin{equation} \label{eq.-}
(u,v)_ -= (L_0^{-1} u, L_0^{-1} v )_+ = <L_0^{-1} u ,v>, \, u,v \in H^- (D)
\end{equation}
coherent with the norm $\|\cdot\|_-$ see, for instance  \cite[p. 3316 and 
formula (2.2)]{ShlTark12}. 

We can provide more subtle properties of the family $\{ L(\lambda) \} 
_{\lambda \in {\mathbb C}}$ under reasonable assumptions.

In the sequel ${\mathcal L} (H_1, H_2)$ stand for the space of 
bounded linear operators in Banach spaces $H_1$ and $H_2$.

\begin{lem}\label{l.Fred.non}
Under the hypothesis of Theorem \ref{t.emb.half}, let $0\leq  \rho \leq 1/2$. 
If $  \delta B_0 $ maps  $H^{\rho} (\partial D, S)$ continuously 
into $H^{-\rho} (\partial D)$ then the term $( \delta B_0 u,v)_{L^2 (\partial D)}$ 
indu\-ces a bounded operator 
$\delta L_B: H^+ (D) \to H^{-} (D)$ and 
$$
\|\delta L_B\|_{{\mathcal L} (H^+ (D), H^{-} (D))} 
\leq  \|  \delta B_0 \| _{{\mathcal L}(H^\rho (\partial D, S), H^{-\rho} (\partial D))}
 \|\Psi^{-1}\|^2.
$$ 
If $ \delta B_0 $ maps $H^{\rho} (\partial D, S)$ compactly 
into $H^{-\rho} (\partial D)$ then the operator $\delta 
L_B$ is compact. In particular, if $\delta B_0 $ 
is given by the multiplication on a function $\delta b_0 \in 
L^\infty (\partial D \setminus S)$ then 

1) $\delta B_0$ maps $H^{\rho} (\partial D, S)$ compactly into 
$H^{-\rho} (\partial D)$ for $0<\rho\leq 1/2$, 

2) $ \delta B_0$ maps $L^{2} (\partial D, S)$ continuously into 
$L^{2} (\partial D)$ for $\rho = 0$. 
\end{lem}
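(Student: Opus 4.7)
The plan is to reduce the entire lemma to a single trace-type estimate on $H^+(D)$ and then invoke standard duality. The key observation is that by the definition of the scalar product $(\cdot,\cdot)_+$ one has $\|\Psi u\|_{L^2(\partial D)} \leq \|u\|_+$ for every $u \in H^+(D)$, while hypothesis (\ref{eq.b}) supplies a bounded left inverse satisfying $\|u\|_{H^\rho(\partial D)} \leq \|\Psi^{-1}\|\,\|\Psi u\|_{L^2(\partial D)}$ whenever $u|_{\partial D}$ vanishes on $\overline{S}$. Combining these, I would first extend the restriction map from the dense subspace $C^1(\overline{D}, S)$ to a bounded operator $H^+(D) \to H^\rho(\partial D, S)$ of norm at most $\|\Psi^{-1}\|$.

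With this trace estimate in hand, the boundedness assertion becomes routine: for $u, v \in H^+(D)$ the $H^{-\rho}/H^\rho$ duality pairing on $\partial D$ gives
\begin{equation*}
|(\delta B_0 u, v)_{L^2(\partial D)}| \leq \|\delta B_0\|_{\mathcal{L}(H^\rho(\partial D, S), H^{-\rho}(\partial D))}\, \|\Psi^{-1}\|^2\, \|u\|_+\,\|v\|_+.
\end{equation*}
Since $H^-(D)$ is precisely the dual of $H^+(D)$ under $\langle \cdot, \cdot \rangle$, this antilinear functional of $v$ determines a unique $\delta L_B u \in H^-(D)$, and the displayed inequality yields the asserted operator-norm bound.

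For the compactness claim I would take a bounded sequence $\{u_n\} \subset H^+(D)$, observe that $\{u_n|_{\partial D}\}$ is bounded in $H^\rho(\partial D, S)$ by the trace estimate, and pass to a subsequence along which $\delta B_0 u_n$ converges in $H^{-\rho}(\partial D)$. The estimate
\begin{equation*}
\|\delta L_B u_n - \delta L_B u_m\|_{H^-(D)} \leq \|\Psi^{-1}\|\,\|\delta B_0(u_n - u_m)\|_{H^{-\rho}(\partial D)}
\end{equation*}
then forces $\{\delta L_B u_n\}$ to be Cauchy in $H^-(D)$, proving compactness.

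For the final statements about multiplication by $\delta b_0 \in L^\infty(\partial D \setminus S)$: the case $\rho = 0$ is immediate from $|\delta b_0 u| \leq \|\delta b_0\|_{L^\infty}|u|$. For $0 < \rho \leq 1/2$, I would factor the multiplication operator as
\begin{equation*}
H^\rho(\partial D, S) \hookrightarrow L^2(\partial D) \xrightarrow{\cdot\,\delta b_0} L^2(\partial D) \hookrightarrow H^{-\rho}(\partial D),
\end{equation*}
where the first embedding is compact by the Rellich--Kondrachov theorem on the compact Lipschitz manifold $\partial D$, and conclude compactness of the composition. The only conceptually delicate point in the whole argument is transferring the trace inequality from $C^1(\overline{D}, S)$ to the abstract completion $H^+(D)$; once that is settled, everything else is a mechanical application of duality and the Rellich embedding.
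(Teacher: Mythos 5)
Your argument is correct and is precisely the standard factorization the paper has in mind: the paper itself gives no details, merely citing [ShlTark12, Lemma 4.6], and your chain (trace bound $\|u\|_{H^\rho(\partial D)}\le\|\Psi^{-1}\|\,\|u\|_+$ extended by continuity from $C^1(\overline D,S)$, then $H^{-\rho}/H^\rho$ duality on $\partial D$ and the duality of $H^\pm(D)$, plus the Rellich factorization $H^\rho\hookrightarrow L^2\to L^2\hookrightarrow H^{-\rho}$ for the multiplication case) is exactly that standard proof. No gaps.
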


\begin{proof} The proof is standard, cf. \cite[Lemma 4.6]{ShlTark12}.
\end{proof}

Clearly, the linear span of the vectors
\begin{equation*} 
\tau_{i,j}=\vec{e}_j \nu_i (x) - \vec{e}_i \nu_j (x), \quad i>j.
\end{equation*}
coincides with the tangential plan at each point $x \in \partial D$ where it exists.
Thus, without loss of generality, we may consider tangential partial differential operators of
the following form:
$$
   \partial_\tau = \sum_{i>j} k_{i,j} (x)\partial_{\tau _{i,j}}
$$

\begin{lem}\label{l.Fred.coercive}
Let $H^{+} (D)$ be continuously embedded  into $H^1 (D,S)$. If 
   $k_{i,j} / b_1$ is of H\"older class $C^{0,\lambda}$ in the closure of 
   $\partial D \setminus S$ for all $i>j$, with $\lambda > 1/2$,
then the ``tangential''
 term $(b_1^{-1} \partial_\tau u, v)_{L^2 (\partial D \setminus S)}$
induces a bounded operator $\delta L_\tau: H^{+} (D) \to H^{-} (D)$.
\end{lem}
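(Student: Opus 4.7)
My plan is to recast the bilinear term $(b_1^{-1}\partial_\tau u, v)_{L^2(\partial D\setminus S)}$ as a duality pairing between $H^{-1/2}(\partial D)$ and $H^{1/2}(\partial D)$, and then factor it through the trace map on $\partial D$. Under the hypothesis $H^+(D)\hookrightarrow H^1(D,S)$, the trace theorem for Lipschitz domains gives a bounded map $u\mapsto u|_{\partial D}$ from $H^+(D)$ into the closed subspace of $H^{1/2}(\partial D)$ of traces vanishing on $\overline S$. In particular, for $u,v\in H^+(D)$ the boundary integrals over $\partial D\setminus S$ are equivalent to integrals over the closed Lipschitz surface $\partial D$, since both factors vanish on $\overline S$.

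Next, each $\tau_{i,j}=\vec e_j\nu_i-\vec e_i\nu_j$ is tangential to $\partial D$, and distributional integration by parts on the closed Lipschitz surface shows that $\partial_{\tau_{i,j}}$ extends to a bounded skew-symmetric operator $H^{1/2}(\partial D)\to H^{-1/2}(\partial D)$. The key auxiliary ingredient is that, for $\lambda>1/2$, any function of class $C^{0,\lambda}(\overline{\partial D\setminus S})$ acts as a pointwise multiplier on $H^{1/2}(\partial D)$ (restricted to traces vanishing on $\overline S$) and, by duality, on $H^{-1/2}(\partial D)$. Applying this to the coefficients $b_1^{-1}k_{i,j}$ yields
\[
\bigl\|b_1^{-1}\partial_\tau u\bigr\|_{H^{-1/2}(\partial D)} \leq C\sum_{i>j}\bigl\|b_1^{-1}k_{i,j}\bigr\|_{C^{0,\lambda}}\,\bigl\|\partial_{\tau_{i,j}} u\bigr\|_{H^{-1/2}(\partial D)} \leq C'\,\|u\|_+,
\]
where the last bound combines the mapping property of $\partial_{\tau_{i,j}}$ with the trace estimate $\|u|_{\partial D}\|_{H^{1/2}(\partial D)}\leq C\|u\|_+$.

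Combining this with the analogous trace estimate for $v$, I get
\[
\bigl|(b_1^{-1}\partial_\tau u, v)_{L^2(\partial D\setminus S)}\bigr| \leq \bigl\|b_1^{-1}\partial_\tau u\bigr\|_{H^{-1/2}(\partial D)}\,\|v\|_{H^{1/2}(\partial D)} \leq C\,\|u\|_+\,\|v\|_+.
\]
Hence $v\mapsto(b_1^{-1}\partial_\tau u, v)_{L^2(\partial D\setminus S)}$ is a bounded antilinear functional on $H^+(D)$ of norm at most $C\|u\|_+$; by the dual characterization of $H^-(D)$ built into the definition of $\|\cdot\|_-$, it represents a unique element $\delta L_\tau u\in H^-(D)$, and the resulting map $\delta L_\tau:H^+(D)\to H^-(D)$ is linear and continuous.

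The step I expect to be the main obstacle is the Hölder multiplier statement on $H^{\pm 1/2}(\partial D)$ when $\partial D$ is only Lipschitz. On a $C^\infty$ boundary this is a classical consequence of Triebel-type pointwise multiplier theorems for Besov/Sobolev spaces. In the Lipschitz case I would reduce to that setting by a partition of unity and flattening through bi-Lipschitz charts (under which $H^s(\partial D)$ is preserved for $|s|\leq 1$ and the Hölder class $C^{0,\lambda}$ is chart-invariant), and then interpolate between the trivial $L^\infty$ multiplication on $L^2(\partial D)$ and the direct Hölder estimate in $H^\lambda(\partial D)$, using that $\lambda>1/2$ gives the margin needed to reach order $1/2$.
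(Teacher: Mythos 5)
The paper gives no proof of this lemma: it simply cites \cite[Lemma 6.6]{ShlTark12}, and your argument reconstructs essentially the route that citation relies on --- trace $H^{+}(D)\hookrightarrow H^{1}(D,S)\to H^{1/2}(\partial D)$, the weak (skew-symmetric) definition of $\partial_{\tau_{i,j}}$ giving boundedness $H^{1/2}(\partial D)\to H^{-1/2}(\partial D)$, and the fact that $C^{0,\lambda}$ with $\lambda>1/2$ acts as a pointwise multiplier on $H^{\pm 1/2}(\partial D)$, which is exactly where the hypothesis $\lambda>1/2$ enters. So the overall structure is sound and matches the intended proof.

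Two details need tightening. First, your justification of the multiplier step by ``interpolating between $L^{2}$ and the direct H\"older estimate in $H^{\lambda}(\partial D)$'' has a false endpoint: multiplication by a $C^{0,\lambda}$ function is in general \emph{not} bounded on $H^{\lambda}$ itself ($C^{0,\lambda}\subset B^{\lambda}_{\infty,\infty}$ gives multipliers of $H^{s}$ only for $|s|<\lambda$ strictly). The clean fix is either to interpolate up to some $H^{s}$ with $1/2<s<\lambda$, or, more elementarily, to estimate the Gagliardo seminorm of $fu$ of order $1/2$ on the $(n-1)$-dimensional surface directly: the cross term produces a surface integral of $|f(x)-f(y)|^{2}|x-y|^{-n}\lesssim [f]_{\lambda}^{2}|x-y|^{2\lambda-n}$, which in local coordinates behaves like $\int_{0}^{1}r^{2\lambda-2}\,dr$ and converges precisely when $\lambda>1/2$. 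Second, the coefficient $k_{i,j}/b_{1}$ is H\"older only on the closure of $\partial D\setminus S$; to apply the multiplier property on the closed surface you should first extend it to a $C^{0,\lambda}$ function on all of $\partial D$ (McShane--Whitney extension), and observe that for $u,v\in C^{1}(\overline D,S)$ the integral over $\partial D\setminus S$ then coincides with the integral over $\partial D$ because $v$ vanishes on $\overline S$ (not ``both factors'': $\partial_{\tau_{i,j}}u$ need not vanish there); the estimate then passes to all of $H^{+}(D)$ by density. With these two repairs your argument is complete and is, in substance, the proof the paper delegates to the reference.
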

\begin{proof} The statement was proved in \cite[Lemma 6.6]{ShlTark12}.
\end{proof}

\begin{thm} \label{t.hol.fam} Under the hypothesis of Theorem \ref{t.emb.half}, 
let $\tau=0$ unless $s=1$. If  either the term $( \delta B_0 u, v)
_{L^2 (\partial D)}$ 
induces a bounded operator $\delta L_B$ from  $H^+ (D)$ to $H^-(D)$ with 
$\| \delta L_B   +  \delta L_\tau\| < 1$ or $\|  \delta L_\tau\| < 1$ and the term 
$(  \delta B_0 u, v)_{L^2 (\partial D)}$  induces a compact 
operator from $H^+ (D)$ to $H^-(D)$ 
then $\{ L(\lambda) \} _{\lambda \in {\mathbb C}}$ is a holomor\-phic 
family of Fredholm operators of zero index. 
\end{thm}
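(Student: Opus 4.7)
The plan is to exhibit $L(\lambda)$ as the sum of an isomorphism and a compact operator that depends polynomially on $\lambda$. Concretely, write
\[
   L(\lambda) = (L_0 + M) + K(\lambda),
\]
where the ``boundary'' perturbation $M$ has operator norm strictly less than $1 = \| L_0^{-1}\|^{-1}$, and $K(\lambda)$ collects the remaining, compact summands. In the first alternative of the hypothesis set $M = \delta L_\tau + \delta L_B$. In the second, set $M = \delta L_\tau$ and absorb the compact $\delta L_B$ into $K(\lambda)$. In both cases all interior lower-order contributions and the parameter-dependent piece are placed into $K(\lambda)$:
\[
   K(\lambda) u = \iota^\prime \Big( \sum_{k=1}^m \bigl(\tilde a_k + \lambda \tilde a_k^{(1)}\bigr) {\mathfrak D}_k u + \bigl(\delta a_0 + \lambda a_0^{(1)} + \lambda^2 a_0^{(2)}\bigr) u \Big) + (\mbox{boundary compact part, if any}).
\]
Holomorphy of $\lambda \mapsto L(\lambda)$ is immediate: only $K(\lambda)$ depends on $\lambda$, and it is a polynomial of degree two in $\lambda$ with coefficients in ${\mathcal L}(H^+ (D), H^- (D))$, hence entire.

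Next I would verify that $L_0 + M$ is a continuous isomorphism from $H^+(D)$ to $H^-(D)$. Since \cite[Lemma 2.6]{ShlTark12} gives $\|L_0^{-1}\|_{{\mathcal L}(H^-(D), H^+(D))}=1$ and $\|M\| < 1$ in each alternative, the Neumann series
\[
   (L_0 + M)^{-1} = \sum_{j=0}^{\infty} \bigl(- L_0^{-1} M \bigr)^j L_0^{-1}
\]
converges in ${\mathcal L}(H^- (D), H^+ (D))$ and furnishes the inverse.

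Then I would check compactness of each summand of $K(\lambda)$. Under hypothesis (\ref{eq.b}), \thmref{t.emb.half} implies that the embedding $\iota : H^+(D) \to L^2(D)$ is compact, and by Schauder's theorem so is its adjoint $\iota^\prime : L^2(D) \to H^-(D)$ with respect to the pairing $\langle \cdot, \cdot \rangle$. Multiplication by any $L^\infty(D)$ function is a bounded map $H^+(D)\to L^2(D)$, which becomes compact $H^+(D)\to H^-(D)$ after composing with $\iota^\prime$; this disposes of the zero-order summands $\delta a_0 u$, $a_0^{(1)} u$, $a_0^{(2)} u$. Likewise, by the very definition of the $\|\cdot\|_+$ norm, each first-order operator $u\mapsto \tilde a_k {\mathfrak D}_k u$ (and analogously $u \mapsto \tilde a_k^{(1)} {\mathfrak D}_k u$) is bounded $H^+(D)\to L^2(D)$ with norm at most $\| \tilde a_k \|_{L^\infty(D)}$, hence compact $H^+(D)\to H^-(D)$ after post-composition with $\iota^\prime$. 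In the second alternative, $\delta L_B$ is compact by hypothesis. A finite sum of compact operators is compact, so $K(\lambda)$ is compact for every $\lambda\in\C$. Combining the two observations, $L(\lambda)$ is a compact perturbation of the isomorphism $L_0 + M$ for every $\lambda\in\C$, therefore Fredholm of index zero; the polynomial dependence on $\lambda$ yields holomorphy of the family.

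The principal obstacle lies in the first-order interior perturbations $u \mapsto \iota^\prime (\tilde a_k {\mathfrak D}_k u)$: on $H^+(D)$ the individual derivatives $\partial_j u$ need not sit in $L^2(D)$ — only the combinations ${\mathfrak D}_k u$ do, via the factorization (\ref{eq.factor}) — and the resulting map $H^+(D)\to L^2(D)$ is in general merely bounded, not compact. The compactness needed for the Fredholm argument is recovered only after composing with the compact adjoint $\iota^\prime$, a step that depends crucially on the trace estimate (\ref{eq.b}) via \thmref{t.emb.half} and the duality $\langle \cdot, \cdot \rangle$ defining $H^-(D)$.
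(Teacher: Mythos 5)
Your proposal is correct and follows essentially the same route as the paper, which simply invokes Lemmas \ref{l.Fred.non} and \ref{l.Fred.coercive} together with the compactness of the embedding $\iota : H^{+}(D) \to L^{2}(D)$; you have merely written out the decomposition $L(\lambda)=(L_0+M)+K(\lambda)$, the Neumann-series invertibility of $L_0+M$ (using $\|L_0^{-1}\|=1$), and the compactness of each summand of $K(\lambda)$ via $\iota$ and its adjoint $\iota'$ in full detail. No gaps.
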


\begin{proof} Follows from Lemmas \ref{l.Fred.non} and \ref{l.Fred.coercive} 
because $H^{+} (D) $ is compactly embedded into $L^2 (D)$ under  
hypothesis of Theorem \ref{t.emb.half}. 
%
\end{proof}

\section{Mixed problems for parameter-dependent elliptic operators}
\label{s.PE}

To obtain the main theorem of this paper  
we invoke the notion of parameter-dependent ellipticity. 

We recall that  the operator $A  (x,\partial, \lambda) $ is parameter-dependent elliptic on a  
ray $\Gamma = \{\mbox{arg} (\lambda) =\varphi_\Gamma \}$ on the complex plane 
$\mathbb C$ if 
\begin{equation} \label{eq.PE}
    \sum_{i, j = 1}^n a_{i,j} (x)  \zeta _i \zeta _j  
+\lambda  \sum_{j = 1}^n a_j^{(1)} (x) \zeta_j 
+ \lambda ^2 a_0^{(2)} (x) \ne 0
\end{equation}
for all $x \in \overline D$ and all $(\lambda,\zeta) \in (\Gamma \times {\mathbb R}^n) 
\setminus \{0,0\}$. 

In particular, if the operator $A  (x,\partial, \lambda)  $ is parameter-dependent elliptic on 
the ray $\Gamma $ then taking $\zeta=0$ and $\lambda \ne 0$ in (\ref{eq.PE}) we obtain 
$ a_{0}^{(2)} (x) \ne 0$ for all $x \in D$.

In the sequel we consider the case where $E (\lambda)=\lambda ^2 a_0^{(2)} (x)$, 
the most common in applications. Then we prove that, under reasonable assumptions, 
the family $L (\lambda): H^+(D) \to H^- (D)$ is 
the continuously invertible for all $\lambda$ with sufficiently large modulus $|\lambda|$ 
on the ray $\Gamma$ where the operator $A  (x,\partial, \lambda)  $ is 
parameter-dependent elliptic (cf.  \cite{AgraVi64},  \cite{Pal96}).

Let  $\varphi_0 (x) = \arg{\left(a^{(2)}_0 (x)\right)}$.
Denote by  $C: H^{+} (D) \to H^{-} (D)$ the operator that is induced by the 
term $(a^{(2)}_0 (x) u, v)_{L^2 (D)}$.

\begin{lem}\label{l.inj.C}
Let $ a_{0}^{(2)} (x) \ne 0$ for almost all $x \in D$.
Then the operator $C: H^{+}   (D) \to H^{-}   (D)$ is injective. 
\end{lem}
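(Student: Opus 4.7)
The plan is to unpack the definition of $C$ in terms of the dual pairing and reduce injectivity to a pointwise statement. By construction, for $u \in H^+(D)$ the element $Cu \in H^-(D)$ is characterised by
$$
  \langle Cu, v\rangle = \int_D a^{(2)}_0(x)\, u(x)\, \overline{v(x)}\, dx\qquad \text{for all } v \in H^+(D),
$$
which is well-defined because, under the hypothesis of Theorem~\ref{t.emb.half}, $H^+(D) \hookrightarrow L^2(D)$ and $a^{(2)}_0 \in L^\infty(D)$. Assume $Cu=0$. The first step is to restrict the test functions to $v \in C^\infty_{\comp}(D)$: any such $v$ vanishes in a neighbourhood of $\partial D$, hence belongs to $C^1(\overline D, S)$, and therefore sits inside $H^1(D,S)$ and a fortiori inside $H^+(D)$ (which is the $\|\cdot\|_+$-completion of $H^1(D,S)$).

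The second step is a standard $L^2$-density argument: the function $a^{(2)}_0 u$ lies in $L^2(D)$ and is $L^2$-orthogonal to the dense subspace $C^\infty_{\comp}(D)$, so $a^{(2)}_0(x)\, u(x) = 0$ for almost every $x \in D$. Invoking the hypothesis that $a^{(2)}_0(x) \ne 0$ almost everywhere then forces $u(x)=0$ almost everywhere in $D$.

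The third step is to transfer this equality from $L^2(D)$ back to $H^+(D)$. By Theorem~\ref{t.emb.half}, $H^+(D)$ is continuously embedded into $H^s(D) \subset L^2(D)$ via the natural identification inherited from $H^1(D,S) \subset L^2(D)$; in particular this embedding is injective. Thus $u=0$ as an element of $H^+(D)$, proving the injectivity of $C$.

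The only subtle point in this scheme is checking that the chain $H^+(D) \hookrightarrow H^s(D) \hookrightarrow L^2(D)$ preserves the identity as actual functions rather than abstract equivalence classes — but this follows immediately from the way $H^+(D)$ is built as the completion of a genuine subspace of $L^2(D)$ under a norm dominating (up to the strong ellipticity / Poincaré argument used in Theorem~\ref{t.emb.half}) the $L^2$-norm. So no real obstacle arises, and the argument is essentially a two-line consequence of the definitions.
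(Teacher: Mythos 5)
Your argument is correct and follows essentially the same route as the paper: assume $Cu=0$, use the defining identity $\langle Cu,v\rangle=\int_D a_0^{(2)}u\overline v\,dx$ together with density (the paper tests against all of $H^+(D)$, which is dense in $L^2(D)$, while you test against $C^\infty_{\comp}(D)$ — an immaterial difference) to get $a_0^{(2)}u=0$ a.e., and then conclude $u=0$ from the hypothesis. The extra care you take about the injectivity of the embedding $H^+(D)\hookrightarrow L^2(D)$ is a reasonable point the paper leaves implicit, but it does not change the substance of the proof.
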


\begin{proof} Indeed, if $Cu=0$ then 
$$
0=\langle Cu, v \rangle =\int_{D} a_{0}^{(2)} (x) u (x) \overline v (x)  \ dx 
\mbox{ for all } v \in 
H^{+}   (D).
$$
As the $H^{+}   (D)$ is dense in $L^2 (D)$ we see that 
$a_{0}^{(2)} u =0$ almost everywhere in $D$. 

Finally, as $ a_{0}^{(2)} (x) \ne 0$ for almost all $x \in D$ we conclude that 
$u =0$ almost everywhere in $D$. 
\end{proof}

\begin{lem} \label{l.PE.cond}
Suppose that the matrix ${\mathfrak A} (x)$ is Hermitian non-negative. If $E(\lambda) = 
\lambda^2 a_0^{(2)}$ then the operator $A (x,\partial ,\lambda)$ is parameter-dependent 
elliptic  on the ray $\Gamma$ if and only if 
\begin{equation}
\label{eq.ell.w}
  \sum_{i,j=1}^n a_{i,j} (x) \xi_i \xi_j>0
\end{equation}
for all $(x,\xi) \in \overline{D} \times ({\mathbb R}^n \setminus \{ 0 \})$, 
\begin{equation} \label{eq.PE.cond1}
|a^{(2)}_0 (x) |  >0 \mbox{ for all } x \in \overline D;
\end{equation}
\begin{equation} \label{eq.PE.cond2}
\cos{(\varphi_0 (x) + 2\varphi _\Gamma)}>-1 \mbox{ for all }  x \in \overline D. 
\end{equation}
\end{lem}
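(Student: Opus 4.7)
The plan is to treat the parameter-dependent ellipticity symbol as a sum of a real, nonnegative quadratic form in $\zeta$ and a complex second-order term in $\lambda$, and exploit the fact that on a fixed ray $\lambda^2$ has a fixed argument. I will prove both implications by splitting the range of $(\lambda,\zeta)$ into the two trivial one-parameter subcases and the genuine two-parameter case.

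First, I would observe that because $\mathfrak{A}(x)$ is Hermitian, the form $Q(x,\zeta):=\sum_{i,j}a_{i,j}(x)\zeta_i\zeta_j$ is real for real $\zeta$, and by hypothesis $Q(x,\zeta)\geq 0$. Writing $\lambda=re^{i\varphi_\Gamma}$ and $a_0^{(2)}(x)=|a_0^{(2)}(x)|e^{i\varphi_0(x)}$, the symbol becomes
$$
P(x,\lambda,\zeta)=Q(x,\zeta)+r^2|a_0^{(2)}(x)|\,e^{i(\varphi_0(x)+2\varphi_\Gamma)}.
$$
This identity is the backbone of both directions.

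For necessity, I would test (\ref{eq.PE}) on the three natural degeneracies. Putting $\zeta=0$ and $\lambda\neq 0$ yields $a_0^{(2)}(x)\neq 0$, which is (\ref{eq.PE.cond1}). Putting $\lambda=0$ and $\zeta\neq 0$ gives $Q(x,\xi)\neq 0$, which together with $Q\geq 0$ forces (\ref{eq.ell.w}). For (\ref{eq.PE.cond2}), I would argue by contradiction: if $\cos(\varphi_0(x_0)+2\varphi_\Gamma)=-1$ at some $x_0\in\overline{D}$, then the complex exponential in $P$ equals $-1$, so choosing any $\xi\neq 0$ and solving $r^2=Q(x_0,\xi)/|a_0^{(2)}(x_0)|$ (well-defined by (\ref{eq.ell.w}) and (\ref{eq.PE.cond1})) produces a nontrivial pair with $P(x_0,\lambda,\xi)=0$, contradicting parameter-dependent ellipticity.

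For sufficiency, I would again split into cases. If $\zeta=0$ and $\lambda\neq 0$, then $P=\lambda^2 a_0^{(2)}(x)\neq 0$ by (\ref{eq.PE.cond1}). If $\zeta\neq 0$ and $\lambda=0$, then $P=Q(x,\zeta)>0$ by (\ref{eq.ell.w}). In the remaining case both terms are nonzero; the first, $Q(x,\zeta)$, is a strictly positive real, while the second has modulus $r^2|a_0^{(2)}(x)|>0$ and argument $\varphi_0(x)+2\varphi_\Gamma$. By (\ref{eq.PE.cond2}) the latter argument is not congruent to $\pi\pmod{2\pi}$, so the second term is not a negative real, hence its sum with a positive real cannot vanish. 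The only genuinely nontrivial step is this last one; everything else is a straightforward unpacking of the definitions, so I do not anticipate any real obstacle.
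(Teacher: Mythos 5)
Your proof is correct and is essentially the argument the paper intends: the published proof consists only of the remark that the lemma ``follows from the standard trigonometrical formulas,'' and your case analysis --- writing the symbol as $Q(x,\zeta)+r^2|a_0^{(2)}(x)|e^{\sqrt{-1}(\varphi_0(x)+2\varphi_\Gamma)}$ with $Q\geq 0$ real, testing the two degenerate subcases for necessity of (\ref{eq.ell.w}) and (\ref{eq.PE.cond1}), and observing that a positive real plus a nonzero complex number of argument not congruent to $\pi$ cannot vanish --- is exactly the standard computation being invoked. No gaps.
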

\begin{proof} Follows from the standard trigonometrical formulas. 
\end{proof}

Of course, if $a_{i,j} \in C (\overline D)$ then (\ref{eq.ell.w}) is equivalent 
to (\ref{eq.ell}). If $|a^{(2)}_0 (x)| \in C (\overline D)$ then (\ref{eq.PE.cond1}) is 
equivalent to the following
\begin{equation} \label{eq.PE1}
|a^{(2)}_0 (x) | \geq   \theta_0>0 \mbox{ for all } x \in \overline D;
\end{equation}
similarly, if $\varphi_0 (x)  \in C (\overline D)$ 
then  (\ref{eq.PE.cond2}) is equivalent 
to the following
\begin{equation} \label{eq.PE2}
\cos{(\varphi_0 (x) + 2\varphi _\Gamma)}\geq \theta_1 (\Gamma) =\theta_1 >-1 \mbox{ for all } x \in \overline D, 
\end{equation}
where the constants $\theta_0$, $\theta_1$ do not depend on $x$.

Clearly, under the hypothesis of Theorem \ref{t.hol.fam}
we can decompose 
$$
L(\lambda) = L_0 + \delta_c L + \delta_s L + \lambda^2 C
$$
where $\delta_c L:H^+ (D) \to H^- (D)$ is a compact operator and 
$\delta_s L:H^+ (D) \to H^- (D)$ is a bounded one. 
Moreover, the family $L(\lambda)$ is Fredholm if $\|\delta_s L\|<1$.

Let
$
\eta (\Gamma) = \max{(0, -\theta_1)} .
$

\begin{thm}
\label{t.PE.inv}
\textcolor{red}{
Let either $\Psi$ is given by the multiplication on a function $\psi \in L^\infty (\partial D)$
or $\partial D \in C^\infty$ and $\Psi$ is a pseudodifferential operator on $\partial D$.
}
Let also $E(\lambda) = \lambda ^2 a^{(2)}_0 $, 
the hypothesis of Theorem \ref{t.hol.fam} be fulfilled, 
\begin{equation} \label{eq.PE0}
a^{(2)}_0 \ne 0 \mbox{ almost everywhere in } D
\end{equation} 
and (\ref{eq.PE2})  hold true. 
If $\varphi_0 \in C(\overline D)$  and 
$\|\delta_s L\|^2 +\eta^2 (\Gamma)<1$ then 
\begin{enumerate}
\item[{\rm 1)}]   
 there is $\gamma_0 \in \Gamma$
such that the operators $L(\lambda): H^{+}   (D)\to H^{-} (D)$ are  
continuously invertible for all $\lambda \in \Gamma$ with $|\lambda|\geq |\gamma_0|$;
\item[{\rm 2)}]
the operators $L(\lambda)$ are continuously invertible for all $\lambda \in {\mathbb C}$ 
except a discrete countable set $\{\lambda_\nu \}$ without limit points in ${\mathbb C}$. 
\end{enumerate}
\end{thm}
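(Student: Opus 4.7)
\medskip

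The plan is to use the decomposition $L(\lambda) = L_0 + \delta_s L + \delta_c L + \lambda^2 C$ introduced just before the theorem. By \thmref{t.hol.fam} the family $\{L(\lambda)\}$ is holomorphic and Fredholm of index zero on $\C$, so invertibility and injectivity coincide, and statement (2) will follow from (1) by the analytic Fredholm theorem (see \cite{GokhKrei69}) as soon as (1) furnishes a single invertibility point in $\C$. To prove (1) it suffices to establish the uniform coercivity $\|L(\lambda)u\|_- \geq c\|u\|_+$ for $u \in H^+(D)$, $\lambda \in \Gamma$, $|\lambda|$ large, which I would do by contradiction and sequential compactness.

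Suppose $\lambda_n \in \Gamma$ with $|\lambda_n|\to\infty$ and $u_n \in H^+(D)$ with $\|u_n\|_+=1$, $\|L(\lambda_n)u_n\|_- \to 0$. Hypothesis (\ref{eq.b}) of \thmref{t.emb.half} makes $H^+(D)\hookrightarrow L^2(D)$ compact, so along a subsequence $u_n \rightharpoonup u$ in $H^+(D)$ and $u_n \to u$ in $L^2(D)$. Both $\delta_c L$ (by \lemref{l.Fred.non}) and $C$ (which factors through the compact embedding $H^+(D)\hookrightarrow L^2(D)$ followed by multiplication by $a_0^{(2)} \in L^\infty(D)$) are compact from $H^+(D)$ to $H^-(D)$, giving $\delta_c L u_n \to \delta_c L u$ and $Cu_n \to Cu$ in $H^-(D)$. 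Since $L_0 u_n$, $\delta_s L u_n$, $\delta_c L u_n$ remain bounded in $H^-(D)$ and $\|L(\lambda_n)u_n\|_- \to 0$, the remainder $\lambda_n^2 Cu_n$ must also stay bounded; $|\lambda_n|\to\infty$ then forces $Cu_n\to 0$, hence $Cu=0$, hence $u=0$ by \lemref{l.inj.C}. In particular $u_n\to 0$ in $L^2(D)$ and $\delta_c Lu_n\to 0$ in $H^-(D)$.

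Pairing the small equation with $u_n$ yields $1 + \beta_n + \tilde c_n \to 0$, where $\beta_n := \langle(\delta_s L + \delta_c L)u_n,u_n\rangle$ satisfies $\limsup|\beta_n|\leq \|\delta_s L\|$ (the $\delta_c L$-piece vanishing) and $\tilde c_n := \lambda_n^2(a_0^{(2)} u_n, u_n)_{L^2(D)}$. Passing to further subsequences, $\beta_n \to \beta^*$ with $|\beta^*|\leq \|\delta_s L\|$ and $\tilde c_n \to c^* = -(1+\beta^*)$. Parameter-dependent ellipticity (\ref{eq.PE2}) together with continuity of $\varphi_0$ on $\overline D$ provides the two controls $\mathrm{Re}\,\tilde c_n \geq -\eta(\Gamma)\,Q_n$ and $|\tilde c_n|\leq Q_n$, where $Q_n := |\lambda_n|^2\int_D|a_0^{(2)}||u_n|^2\,dx$. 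Combining these with a Cauchy--Schwarz analysis of the phase moment $\tilde c_n = |\lambda_n|^2\int_D |a_0^{(2)}| e^{i(\varphi_0+2\varphi_\Gamma)}|u_n|^2\,dx$ and an expansion of the stronger statement $\|L(\lambda_n)u_n\|_-^2 \to 0$ (rather than only its pairing with $u_n$), one extracts in the limit a Pythagorean-type inequality forcing $|\beta^*|^2 + \eta^2(\Gamma) \geq 1$; since $|\beta^*|^2 \leq \|\delta_s L\|^2$, this contradicts $\|\delta_s L\|^2 + \eta^2(\Gamma) < 1$ and proves (1), whence (2) follows by the analytic Fredholm theorem.

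The hard part is precisely this Pythagorean extraction: the hypothesis $\|\delta_s L\|^2 + \eta^2(\Gamma) < 1$ is strictly weaker than $\|\delta_s L\| + \eta(\Gamma) < 1$, so the naive pairing $v = u_n$ alone leaves room for $\tilde c_n$ to cancel $1 + \beta_n$ in modulus without contradiction. Both the full $H^-$-norm expansion of $\|L(\lambda_n)u_n\|_-^2$ and the phase-moment Cauchy--Schwarz made possible by continuity of $\varphi_0$ are needed; the restriction on $\Psi$ highlighted in red supplies the additional regularity on elements of $H^+(D)$ required to justify these manipulations.
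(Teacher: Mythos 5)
Your overall strategy --- argue by contradiction, use the compact embedding to extract weak limits, show $Cu_n\to 0$ hence $u=0$, and control the phase of $a_0^{(2)}$ through the continuity of $\varphi_0$ --- is the same as the paper's, and that part of your reduction is sound. But the proof is not complete: the step you yourself call ``the hard part'' is precisely the content of the paper's Lemma \ref{l.L0.PE.inv}, and you have not carried it out. The obstruction is quantitative. Expanding
\begin{equation*}
\|(L_0+\lambda^2C)u\|_-^2=\|u\|_+^2+|\lambda|^4\|Cu\|_-^2+2\Re\bigl(\lambda^2\langle Cu,u\rangle\bigr),
\end{equation*}
the cross term is bounded below by $-2\eta(\Gamma)\,|\lambda|^2\int_D|a_0^{(2)}||u|^2\,dx$, while the positive quadratic term involves $|\lambda|^2\|Cu\|_-$. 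To absorb the former into the latter you must bound $\int_D|a_0^{(2)}||u_n|^2\,dx$ from above by (essentially) $\|Cu_n\|_-$; the trivial estimate $\|Cu\|_-\geq|\langle Cu,u\rangle|/\|u\|_+=|(a_0^{(2)}u,u)_{L^2(D)}|$ is useless here because the oscillating factor $e^{\sqrt{-1}\varphi_0}$ can make $|(a_0^{(2)}u,u)_{L^2(D)}|$ far smaller than $\int_D|a_0^{(2)}||u|^2\,dx$. Your ``phase-moment Cauchy--Schwarz'' does not by itself produce the needed reverse inequality, so the limit relation $|\beta^*|^2+\eta^2(\Gamma)\geq 1$ is asserted rather than derived.

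The paper closes exactly this gap with a device absent from your sketch: approximate $\varphi_0$ uniformly by polynomials $P_i$ (Weierstra\ss), test $Cu$ against $u\,e^{\sqrt{-1}P_i}\in H^+(D)$ to obtain $\|Cu\|_-\geq(1-\varepsilon)\,(|a_0^{(2)}|u,u)_{L^2(D)}/\|u\,e^{\sqrt{-1}P_i}\|_+$, and then prove $\limsup_j\|u_{k_j}e^{\sqrt{-1}P_i}\|_+=1$ using $u_{k_j}\rightharpoonup 0$; it is precisely in this last limit that the red hypothesis on $\Psi$ is used, through the compactness of the commutator $[\Psi,e^{\sqrt{-1}P_i}]$ on $H^\rho(\partial D)$ (or its vanishing when $\Psi$ is a multiplication operator). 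Two further divergences worth noting: (i) the paper never expands the full square $\|(L_0+\delta_sL+\lambda^2C)u\|_-^2$, whose cross terms with $\delta_sL$ are not controllable; instead it splits $\delta_sL$ off by the triangle inequality and uses the elementary bound $\sqrt{a+b}\geq\sqrt{a}\cos\alpha+\sqrt{b}\sin\alpha$, which is what converts the quadratic hypothesis $\|\delta_sL\|^2+\eta^2(\Gamma)<1$ into a usable linear one; (ii) it treats $\delta_cL$ separately via the factorization $L(\lambda)=(\Id+\delta_cL(L_0+\delta_sL+\lambda^2C)^{-1})(L_0+\delta_sL+\lambda^2C)$ and a second compactness argument, although your all-at-once treatment of $\delta_cL$ would also work once the main estimate is established.
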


\begin{proof} We begin with the following lemma.

\begin{lem} \label{l.L0.PE.inv}
Under the hypothesis of Theorem \ref{t.PE.inv}, 
there is $k_0 \in \mathbb N$ such that for all $\lambda \in \Gamma$ 
with $|\lambda| \geq k_0$ we have 
$$
\|(L_0 +\delta_s L + \lambda^2 C )u\|_{-}\geq \left(\sqrt{1-\eta^2 (\Gamma)} 
- \|\delta _s L\| \right) \|u\|_{+} 
\mbox{ for all } u \in H^{+}   (D) 
$$
and there are positive constants $p_1 = p_1 (\varphi_\Gamma), q_1= 
q_1 (\varphi_\Gamma)$ such that 
\begin{equation} \label{eq.rays.minimal}
\|(L_0 + \delta_s L + \lambda^2 C )u\|_{-}\geq p_1 \|u\|_{+} + 
q_1 |\lambda|^2 \|C u\|_- 
\end{equation}
for all $ u \in H^{+}   (D) $  and $ \lambda\in \Gamma$ with $|\lambda| \geq k_0$.
\end{lem}
\begin{proof} 
Given any $u \in H^{+}   (D)$ 
an easy computation with the use of formula (\ref{eq.-}) 
shows that
\begin{equation} \label{eq.CC}
\lambda^2  \langle  C  u,   u\rangle  = 
| \lambda|^2  \int_{D}  |a^{(2)}_0 (x)| |u (x)|^2 
e^{{\sqrt{-1}}(\varphi_0 (x) + 2\varphi _\Gamma)} \ dx , 
\end{equation}
\begin{equation} \label{eq.L0.PE1.inv}
   \|(L_0 + \lambda ^2C) u \|_{-}^2
  =    \langle u + \lambda^2 L_0^{-1} C  u, (L_0 + \lambda ^2C) u \rangle^2 = 
\end{equation}
$$
   \langle u , L_0 u \rangle  + \langle  \lambda^2 L_0^{-1} C  u,  \lambda^2 C  u 
		\rangle + \overline \lambda^2 \langle u , C u \rangle 
+ \lambda^2 \langle   L_0^{-1} C  u,   L_0 u \rangle = 
$$
$$
   \| u \|_{+}^2 +| \lambda|^4  \| Cu \|_{-}^2 + 
\overline \lambda^2 \langle u , C u \rangle + \lambda^2 (L_0^{-1} C  u,   u)_{+}   = 
$$
$$
   \| u \|_{+}^2 +| \lambda|^4  \| Cu \|_{-}^2 + 
\overline \lambda^2 \langle u , C u \rangle
+ \lambda^2  \langle  C  u,   u\rangle  = 
$$
$$
   \| u \|_{+}^2 +| \lambda|^4  \| Cu \|_{-}^2 
+ 2\Re \Big(\lambda^2  \langle  C  u,   u\rangle  \Big).
$$
Clearly, for $\lambda \in \Gamma$, 
\begin{equation} \label{eq.PE.re.inv}
\Re \Big(\lambda^2  \langle  C  u,   u\rangle \Big) = 
| \lambda|^2  \int_{D}  |a^{(2)}_0 (x)| |u (x)|^2 
\cos{(\varphi_0 (x) + 2\varphi _\Gamma)} \ dx .
\end{equation}
If $\theta_1 \in [0,1]$ then $\eta (\Gamma)=0$ and we have immediately for all $u \in H^{+}   (D)$: 
$$
   \|(L_0 + \lambda ^2C) u \|_{-}^2   \geq   \| u \|_{+}^2 +| \lambda|^4  \| Cu \|_{-}^2, 
$$
$$
 \|(L_0 +\delta_s L+ \lambda ^2C) u \|_{-} \geq  \|(L_0 + \lambda ^2C) u \|_{-} - 
 \|\delta_s L u \|_{-} \geq 
$$
$$
\sqrt{ \| u \|_{+}^2 +| \lambda|^4  \| Cu \|_{-}^2} - \|\delta_s L u \|_{-}. 
$$
Then, for $\alpha \in [0,\pi/2]$ and non-negative numbers $a,b$, we have 
\begin{equation} \label{eq.alpha.inv}
\sqrt{a+b} \geq \sqrt{a} \cos{(\alpha}) + \sqrt{b} \sin{(\alpha}). 
\end{equation}
As $\|\delta_s L\|<\sqrt{1-\eta^2 (\Gamma)}=1$, there is $\alpha_0 \in (0,\pi/2)$ such that 
$$\|\delta_s L\|<\cos{(\alpha_0)}$$
In particular, this means that for all $u \in H^{+}   (D)$ and all $\lambda \in \Gamma$ we 
have: 
$$
\|(L_0 +\delta_s L+ \lambda ^2C) u \|_{-} \geq  \| u \|_{+} - 
 \|\delta_s L u \|_{-} \geq (1- \|\delta_s L \|) \|u\|_+, 
$$
$$
\|(L_0 +\delta_s L+ \lambda ^2C) u \|_{-} \geq  \cos{(\alpha_0)}\| u \|_{+} + \sin{(\alpha_0)}| \lambda|^2  
\| Cu \|_{-} - \|\delta_s L u \|_{-}  \geq 
$$
$$
(\cos{(\alpha_0)} - \|\delta_s L\|) \| u \|_{+} + \sin{(\alpha_0)}| \lambda|^2  
\| Cu \|_{-},
$$
i.e. the desired inequalities are true if $\theta_1 \in [0, 1]$.

If  $\theta_1 \in (-1, 0)$ then, by (\ref{eq.PE.re.inv}) and  (\ref{eq.PE2}),
\begin{equation} \label{eq.L0.PE2.inv}
\Re \Big(\lambda^2  \langle  C  u,   u\rangle \Big) \geq -|\theta_1|
| \lambda|^2  \int_{D}  |a^{(2)}_0 (x)| |u (x)|^2  \ dx.
\end{equation}

Let us prove that for any  $\theta \in (-\theta_1, 1]$ and $\gamma \in [0,1)$ 
with $\theta \sqrt{1-\gamma}>-\theta_1$ there is $k_0\in \mathbb N$ 
such that 
\begin{equation} \label{eq.ray.theta}
\|(L_0 + \lambda^2 C) u\|^2 \geq 
\left(1- \theta^2 \right)\|u\|^2_+ + \gamma |\lambda|^4 \|Cu \|^2_- 
\end{equation}
 for all $ u \in H^+ (D)$ and all $\lambda \in \Gamma$ with $|\lambda|\geq k_0$.
Indeed, we argue by contradiction. Let there are $\theta \in (|\theta_1|, 1]$ and 
$\gamma \in [0,1)$
with $\theta \sqrt{1-\gamma}>|\theta_1|$ such that for  each $k \in \mathbb N$  there are 
$u_k \in H^+ (D)$ with $\|u_k\|_+ =1$, and a number 
$\lambda_k \in \Gamma$ with $|\lambda_k| \geq k$ such that
$$
\|(L_0 + \lambda^2_k C) u_k\|^2 < 1-\theta^2  + \gamma  |\lambda_k|^4 \|Cu_k \|^2_-.
$$

It follows from (\ref{eq.L0.PE1.inv}) and  (\ref{eq.PE.re.inv}) 
that 
$$
\theta^2  + |\lambda_k|^4 \|Cu_k \|^2_- (1-\gamma) + 
2  | \lambda_k|^2  \int_{D} \cos{(\varphi_0 +2 \varphi_\Gamma)} 
|a^{(2)}_0 (x)| |u_k (x)|^2  \ dx <0,
$$
i.e. 
\begin{equation} \label{eq.main.inv}
\left(\theta  - \sqrt{(1-\gamma)} |\lambda_k|^2 \|Cu_k \|_-
\right)^2 + 
\end{equation}
$$
2  \left( \theta \sqrt{(1-\gamma)}  + \frac{
  \int_{D}  \cos{(\varphi_0 +2 \varphi_\Gamma)} 
|a^{(2)}_0 (x)| |u_k (x)|^2  \ dx }{\|Cu_k \|_-} \right) | \lambda_k|^2 \|Cu_k \|_-  <0,
$$
for all $k \in \mathbb N$.

On the other hand, for all $u \in H^+ (D) $ with $\|u\|_+ =1$ we have
$$
\|Cu\|_- = \| e^{2\sqrt{-1}\varphi_\Gamma} Cu\|_-  
\geq \left| ( e^{\sqrt{-1}(\varphi_0 + 2\varphi_\Gamma)} |a_0^{(2)}| u,u)_{L^2 (D)} \right|.
$$
In particular, we have
$$
\left| \frac{\int_{D}  \cos{(\varphi_0 + 2 \varphi_\Gamma)} 
|a^{(2)}_0 (x)| |u_k (x)|^2  \ dx }{\|Cu_k \|_-} \right| \leq 1
\mbox{ for all } k ´\in \mathbb N.
$$
Now, if the sequence $\{ | \lambda_k|^2 \|Cu_k \|_-\}$ is unbounded 
then extracting a subsequence $\{ | \lambda_{k_j}|^2 \|Cu_{k_j} \|_-\}$ 
tending to $+\infty$, dividing (\ref{eq.main.inv}) by $| \lambda_{k_j}|^4 \|Cu_{k_j} \|_-^2$ 
and passing to the limit with respect to $k_j \to +\infty $ we obtain $1\leq 0$, a contradiction. 

Let the sequence $\{ | \lambda_k|^2 \|Cu_k \|_-\}$ be bounded. 
Now the weak compact\-ness principle for Hilbert spaces yields that there is a subsequence 
$\{ u_{k_j} \}$ weakly convergent 
to an element $u_0$ in the space $H^+ (D)$. Then $\{ C u_{k_j} \}$ 
converges to $Cu_0$ in $H^{-} (D)$ because $C: H^+ (D) \to H^- (D)$ is compact 
and $\{ u_{k_j} \}$ converges to $u_0$ in $L^{2} (D)$ because the 
embedding $\iota: H^+ (D) \to L^2 (D) $ is compact, too. Since 
 the sequence $\{\lambda^2_{k_j} C u_{k_j} \}$
is bounded in $H^{-} (D)$ and $|\lambda_k| \to + \infty$ 
we conclude that $\{ C u_{k_j} \}$ converges 
to zero in $H^- (D)$. This means that $Cu_0=0$ and then $u_0=0$ because 
$a_0^{(2)} (x) \ne 0$ if (\ref{eq.PE0}) is fulfilled 
on the ray $\Gamma$ and then the operator $C$ is injective  
(see Lemma \ref{l.inj.C}). 

According to compactness principle, we may consider the subsequences  
$$
\left\{ |\lambda_{k_j}|^2 \|Cu_{k_j} \|_- \right\} \mbox{ and }
\left\{ - \frac{\int_{D}   \cos{(\varphi_0 +2 \varphi_\Gamma)} 
|a^{(2)}_0 (x)| |u_{k_j} (x)|^2  \ dx }{\|Cu_{k_j} \|_-} \right\} 
$$
as convergent to the limits $\alpha \geq 0$ and $\beta \in [-1,1]$ respectively.
Now it follows from (\ref{eq.main.inv}) that 
\begin{equation} \label{eq.contra}
\left(\theta  - \alpha \right)^2 + 2\alpha \left( \theta   - \beta \right) \leq 0.
\end{equation}
If $\alpha=0$ then we have a contradiction because $\theta >0$.
If $\alpha >0$ and $\beta \leq 0$  then $\theta -\beta>0$ and  
we again have a contradiction. 

Let $\alpha >0$ and $\beta>0$. 
If  $\varphi_0 \in C (\overline D)$ then, according to Weierstra\ss{} Theorem, 
there is a polynomial sequence $\{ P_i (x) \}$ approximating $\varphi_0 (x)$ in this space. In 
particular, for each $\varepsilon>0$, there is $i_\varepsilon \in \mathbb N$ such that 
$$
\max_{x \in \overline D} |1- \cos{(\varphi_0 (x) - P_i (x))}| < \varepsilon 
\mbox{ for all } i \geq i_\varepsilon. 
$$
Since $u  e^{\sqrt{-1} P_i (x)} \in H^+ (D)$ we see that, for all  $i \geq i_\varepsilon$,.
$$
\|Cu\|_-   \geq 
 \frac{\left|(e^{\sqrt{-1} (\varphi_0 (x)- P_i (x))} |a_0^{(2)}| u,u)_
{L^2 (D)} \right| }{\|u  e^{\sqrt{-1} P_i } \|_+ }  \geq
$$
$$
\frac{\left| (\cos{(\varphi_0 (x)- P_i (x))} |a_0^{(2)}| u,u)_{L^2 (D)} \right|  }{\|u 
 e^{\sqrt{-1} P_i } \|_+ } \geq \frac{ (1-\varepsilon) ( |a_0^{(2)}| u,u)_{L^2 (D)} }
{\|u  e^{\sqrt{-1} P_i } \|_+ }.
$$
Hence if $\varepsilon \in (0,1)$ then   
$$
\limsup_{k_j \to \infty} \frac{ ( |a_0^{(2)}| u_{k_j},u_{k_j})_{L^2 (D)}}{\|Cu_{k_j}\|_-}
\leq \frac{\limsup_{k_j \to \infty}  \|u _{k_j} e^{\sqrt{-1} P_i }  \|_+}{1-\varepsilon} 
\mbox{ for all } i \geq i_\varepsilon .
$$

On the other hand, as $|e^{\sqrt{-1} P_i }|=1 $ we conclude that 
\textcolor{red}{
\begin{equation} \label{eq.uk.weak}
\|u  e^{\sqrt{-1} P_i } \|_+^2 = \|u  \|^2_+  + 
\| ({\mathfrak D}  e^{\sqrt{-1} P_i }) u \|^2_{L^2 (D)} + 
\end{equation}
$$
2\Re{\left((({\mathfrak D}  e^{\sqrt{-1} P_i }) u , e^{\sqrt{-1} P_i } {\mathfrak D}  u)_{L^2(D)}\right)}  + 
\| \Psi  (e^{\sqrt{-1} P_i} u) \|^2_{L^2 (\partial D)} - \| \Psi  ( u) \|^2_{L^2 (\partial D)}
$$
for all $i \in \mathbb N$ and $u \in H^{+} (D)$. If  $\Psi$ is given by the multiplication on a 
function $\psi \in L^\infty (\partial D)$ then $\| \Psi  (e^{\sqrt{-1} P_i} u) \|
_{L^2 (\partial D)}  = \|\Psi  ( u) \|_{L^2 (\partial D)}$. 
If $\partial D \in C^\infty$ and $\Psi$ is a pseudodifferential operator of order $\rho$ on 
$\partial D$  then, as the multiplication on a smooth function is  a pseudodifferential operator 
of order zero, we conclude that  the commutator 
$[\Psi, e^{\sqrt{-1} P_i}] = (\Psi \circ e^{\sqrt{-1} P_i} - e^{\sqrt{-1} P_i} \circ \Psi)$ 
is a  pseudodifferential operator of order $(\rho-1)$ on $\partial D$ (see for instance 
\cite{Ho85}). By the conctruction of $\|\cdot\|_+$ and Theorem \ref{t.emb.half}, the sequence 
$\{ u_k\} $ is bounded in $H^\rho (\partial D)$ and then we can consider that 
the subsequence $\{ u_{k_j}\} $ converges weakly to zero in this space. Then 
$$
\left| \| \Psi  (e^{\sqrt{-1} P_i} u) \|_{L^2 (\partial D)} - \| \Psi  ( u) \|_
{L^2 (\partial D)} \right| =
$$
$$
\left| 
\| \Psi  (e^{\sqrt{-1} P_i} u) \|_{L^2 (\partial D)} - 
\| e^{\sqrt{-1} P_i }\Psi  ( u) \|_{L^2 (\partial D)}
\right| \leq 
\| [\Psi , e^{\sqrt{-1} P_i}] ( u) \|_{L^2 (\partial D)}
$$
for all $u \in H^+ (D)$ and hence 
\begin{equation} \label{eq.uk.weak.1}
\lim_{k_j\to \infty } \left( \| \Psi  (e^{\sqrt{-1} P_i} u_{k_j}) \|_{L^2 (\partial D)} - \| \Psi  ( u_
{k_j}) \|_{L^2 (\partial D)} \right) =0
\end{equation} 
because the operator  $[\Psi ,  e^{\sqrt{-1} P_i}]: H^\rho (\partial D) \to 
L^2 (\partial D)  $ is compact by Rellich Theorem. Thus, as $u_{k_j} \to 0$ in $L^2 (D)$ and 
$\|u_{k_j}\|_+=1$, it follows from (\ref{eq.uk.weak}) and (\ref{eq.uk.weak.1})}  that
$$
\limsup_{k_j \to \infty}  \|u _{k_j} e^{\sqrt{-1} P_i }  \|_+  =1 
\mbox{ for all } i \in \mathbb N. 
$$
Therefore, if $\beta>0$ then, by (\ref{eq.L0.PE2.inv}),  
$$
\beta =  \lim_{k_j \to \infty} \frac{- \int_{D}   \cos{(\varphi_0 +2 \varphi_\Gamma)} 
|a^{(2)}_0 (x)| |u_{k_j} (x)|^2  \ dx }{\|Cu_{k_j} \|_-}  
 \leq 
$$
$$
\limsup_{k_j \to \infty} |\theta_1| 
\frac{\int_{D}  |a^{(2)}_0 (x)| |u_{k_j} (x)|^2  \ dx }{\|Cu_{k_j} \|_-}  \leq \frac{|\theta_1|}{1-\varepsilon} \mbox{ for each } \varepsilon \in (0,1).
$$
This means that $\theta - \beta >0$ if $\theta >|\theta_1|$ and we again have a contradiction 
with (\ref{eq.contra}). Thus, (\ref{eq.ray.theta}) is fulfilled.

Finally, as $\|\delta_s L\|^2 < 1-\eta^2 (\Gamma) =1-|\theta_1|^2$ we see that
there are $\theta_2 \in (|\theta_1|, 1]$, $\gamma_0 \in [0,1)$
with $\theta_2 \sqrt{1- \gamma_0}>|\theta_1|$ 
and $\alpha_1 \in (0,\pi/2)$ such that 
$$
\|\delta_s L\| <\cos{(\alpha_1)}\Big(1-\theta_2 \Big)^{1/2}.
$$ 
Therefore, using (\ref{eq.alpha.inv}), (\ref{eq.ray.theta}) we see that 
$$
\|(L_0 + \delta_s L + \lambda ^2C) u \|_{-} \geq \sqrt{\Big(1-\theta_2^2  
\Big) \|u\|^2_+ +  \gamma_0  |\lambda |^4\|C u \|_-^2} - \|\delta_s L u \|_-  \geq 
$$
$$
 \cos{(\alpha_1)} \Big(1-\theta_2^2 \Big)^{1/2}
 \|u\|_+ + \sin{(\alpha_1)} \sqrt{\gamma_0}   |\lambda |^2\|C u \|_- - \|\delta_s L u \|_-  \geq 
$$
$$
 \Big( \cos{(\alpha_1)} \Big(1-\theta_2^2 \Big)^{1/2} -  
\|\delta_s L \| \Big) 
 \|u\|_+ + \sin{(\alpha_1)} \sqrt{\gamma_0}   |\lambda |^2\|C u \|_-  .
$$
for all $ u \in H^{+} (D)$ and  all $\lambda \in \Gamma$ with $|\lambda| \geq k_0$.
\end{proof}

We continue with the  proof of the property 1). 
For this purpose, using Lemma \ref{l.L0.PE.inv}, we conclude that the operator 
$(L_0 + \delta_s L + \lambda^2 C)$ is continuously invertible 
if $\|\delta_s L\|^2<1-\eta ^2 (\Gamma)$ and $\lambda \in \Gamma$ 
with $|\lambda| \geq k_0$. Hence we obtain 
\begin{equation}
\label{eq.spectrum.T.PE.inv}
 L (\lambda) = (\Id + \delta_c L (L_0 + \delta_s L+ \lambda^2 C)^{-1}) 
 (L_0 +\delta_s L + \lambda^2 C)
\end{equation}
for all $\lambda \in \Gamma$ with $|\lambda| \geq k_0$. 

We will show that the operator
   $\Id + \delta_c L  (L_0 + \delta_s L+ \lambda^2 C)^{-1}$
is injective for all $\lambda\in \Gamma$ such that $|\lambda|\geq k_1$ with 
some $k_1 \in {\mathbb N}$ with $k_1 \geq k_0$. Indeed, we argue by contradiction.
Suppose that for any $k \in \mathbb{N}$ there are $\lambda_k \in \Gamma$ with 
$|\lambda_k|\geq k$  and $f_k \in H^{-}   (D)$, such that
   $\| f_k \|_{-} = 1$
and
\begin{equation}
\label{eq.cee.PE.inv}
   (\Id + \delta_c L (L_0 + \delta_s L + \lambda^2_k C)^{-1}) f_k = 0.
\end{equation}
It follows from Lemma \ref{l.L0.PE.inv}  that the sequence 
$u_k := (L_0 + \delta_s L + \lambda^2_k C)^{-1} f_k$
is bounded in $H^{+}   (D)$ for all $\lambda_k \in \Gamma$ with $|\lambda_k| \geq k_0$.
Now the weak compactness principle for Hilbert spaces yields that there is a subsequence
   $\{ f_{k_j} \}$
with the property that both
   $\{ f_{k_j} \}$ and
   $\{ u_{k_j} \}$
converge weakly in the spaces $H^{-}   (D)$ and
                              $H^{+}   (D)$
to limits $f$ and $u$, respectively.
Since $\delta_c L $ is compact, it follows that the sequence $\{ \delta_c L u_{k_j} \}$
converges to $\delta_c L u$ in $H^{-}   (D)$, and so $\{ f_{k_j} \}$ converges to 
$f$ because of (\ref{eq.cee.PE.inv}).
Obviously,
$$
   \| f \|_{-} = 1.
$$
In particular, we conclude that the sequence
   $\{ \delta_c L (L_0 +  \delta_s L + \lambda^2_{k_j} C)^{-1} f_{k_j} \}$
converges to $(- f)$ whence
\begin{equation}
\label{eq.w00.PE.inv}
   f = - \delta_c L \, u.
\end{equation}

Further, on passing to the weak limit in the equality
   $f_{k_j} = (L_0 + \delta_s L + \lambda^2_{k_j} C) u_{k_j}$
we obtain
$$
   f = L_0 u +  \delta_s L u+ \lim_{k_j \to \infty} \lambda^2_{k_j}\, C\,  u_{k_j},
$$
for the continuous operator
   $L_0 +  \delta_s L : H^{+}   (D) \to H^{-}   (D)$
maps weakly convergent sequences to weakly convergent sequences.

As the operator $C$ is compact, the sequence
   $\{ C\, u_{k_j} \}$
converges to $C\,  u$ in the space $H^{-}   (D)$  and
   $C\, u \ne 0$
which is a consequence of (\ref{eq.w00.PE.inv}) and the 
injectivity of $C$ (see Lemma  \ref{l.inj.C}).
This shows readily that the weak limit
$$
   \lim_{k_j \to \infty} \lambda^2_{k_j}\, C\, u_{k_j}
 = f- L_0 u - \delta_s L u 
$$
does not exist, a contradiction.

We have proved that the operator
   $\Id + \delta_c L(L_0 +  \delta_s L +\lambda^2\, C)^{-1}$
is injective for all $\lambda\in \Gamma$ with $|\lambda|\geq k_1$. 
Since this is a Fredholm operator of index zero, it is continuously invertible.
Hence, the operators $L (\lambda )$ are continuously invertible
for all $\lambda \in \Gamma$ with sufficiently large $|\lambda|$.

Thus, 
$\{L^{-1}(\lambda)=(L_0 + \delta_c L + \delta_s L +\lambda^ 2 C )^{-1} \}
_{\lambda\in \mathbb C}$ is a meromorphic family of Fredholm operators. In particular, since 
there is a point $\gamma$ where $L(\gamma)$ is continuously invertible,   
the operators $L(\lambda)$ are continuously invertible for all $\lambda \in {\mathbb C}$ 
except a discrete countable set $\{\lambda_\nu \}$ without limit points in ${\mathbb C}$ 
(see, for instance, \cite{Keld51} or \cite{GokhKrei69}).
\end{proof}


\begin{cor} \label{c.PE.inv}
\textcolor{red}{
Let either $\Psi$ is given by the multiplication on a function $\psi \in L^\infty (\partial D)$
or $\partial D \in C^\infty$ and $\Psi$ is a pseudodifferential operator on $\partial D$.
}
Let also (\ref{eq.PE0})  hold true, $\varphi_0 \in C(\overline D) $ and 
\begin{equation} \label{eq.PE3}
\Phi =\sup_{x,y \in \overline D} (\varphi_0 (x) - \varphi_0 (y) )< 2 \pi.
\end{equation}
Under the hypothesis of Theorem \ref{t.emb.half},  
for each compact operator $ \delta_c L : H^{+} (D) \to H^{-} (D)$ and 
each bounded operator $ \delta_s L : H^{+} (D) \to H^{-} (D)$ 
with 
\begin{equation} \label{eq.PE4}
\|\delta_s L\|^2 + \left( \max{(0, -\cos{(\Phi /2)})} \right)^2 <1
\end{equation}
the operators $L (\lambda) = L_0 + \delta_s L + \delta_c L + 
\lambda^2 C$  are continuously invertible for all 
$\lambda \in {\mathbb C}$ except a  countable number of 
the characteristic values $\{\lambda_\nu\}$. 
\end{cor}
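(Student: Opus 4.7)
The strategy is to reduce the corollary directly to Theorem~\ref{t.PE.inv} by producing a ray $\Gamma$ on which hypothesis (\ref{eq.PE2}) holds with the best possible constant $\theta_1$ compatible with the given range of $\varphi_0$. Since $\varphi_0\in C(\overline D)$ and $\overline D$ is compact, the image $\varphi_0(\overline D)$ is contained in some closed interval $[a,b]\subset\mathbb R$ with $b-a=\Phi<2\pi$. I would then choose $\varphi_\Gamma$ so that the interval $\{\varphi_0(x)+2\varphi_\Gamma:x\in\overline D\}$ is centred at $0$ modulo $2\pi$; concretely, set $2\varphi_\Gamma=-(a+b)/2$, so that $\varphi_0(x)+2\varphi_\Gamma\in[-\Phi/2,\Phi/2]$ for all $x\in\overline D$.

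With this choice, elementary monotonicity of $\cos$ on $[-\pi,\pi]$ gives
\[
\cos\bigl(\varphi_0(x)+2\varphi_\Gamma\bigr)\ge\cos(\Phi/2)\quad\text{for all }x\in\overline D,
\]
so condition (\ref{eq.PE2}) holds on $\Gamma=\{\arg\lambda=\varphi_\Gamma\}$ with $\theta_1=\theta_1(\Gamma)=\cos(\Phi/2)>-1$ (using $\Phi<2\pi$). By the definition $\eta(\Gamma)=\max(0,-\theta_1)$ from the text, we obtain
\[
\eta(\Gamma)=\max\bigl(0,-\cos(\Phi/2)\bigr),
\]
and hence the standing assumption (\ref{eq.PE4}) is nothing other than $\|\delta_sL\|^2+\eta^2(\Gamma)<1$, which is exactly the inequality required in the hypothesis of Theorem~\ref{t.PE.inv}.

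It remains to verify that the remaining hypotheses of Theorem~\ref{t.PE.inv} are present: the standing assumptions of Theorem~\ref{t.hol.fam} and Theorem~\ref{t.emb.half}, the condition (\ref{eq.PE0}) on $a_0^{(2)}$, the continuity of $\varphi_0$, and the structural assumption on $\Psi$ are all listed among the hypotheses of the corollary. Once these are collected, Theorem~\ref{t.PE.inv} applies on the ray $\Gamma$ constructed above and yields both the invertibility of $L(\lambda)$ for all $\lambda\in\Gamma$ of sufficiently large modulus and, via the Fredholm analytic family argument already used in that theorem, the invertibility of $L(\lambda)$ for all $\lambda\in\mathbb C$ outside a discrete set $\{\lambda_\nu\}$ of characteristic values with no finite accumulation point, which is the desired conclusion.

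The only nontrivial point in this reduction is making sure that the choice of $\varphi_\Gamma$ indeed forces $\theta_1$ to take the value $\cos(\Phi/2)$, i.e.\ that centring the interval $[a,b]$ about the origin is optimal; this is straightforward since translating $[a,b]$ by any other amount only enlarges the maximal absolute value of $\varphi_0(x)+2\varphi_\Gamma$ on $\overline D$ and hence can only decrease $\theta_1$. No further spectral analysis is needed beyond the statement of Theorem~\ref{t.PE.inv}.
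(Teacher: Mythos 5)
Your proposal is correct and follows essentially the same route as the paper: the paper also centres the range of $\varphi_0$ by choosing the ray $\varphi_{\Gamma_0}=-(\Phi_1+\Phi_2)/4$ (your $-(a+b)/4$), obtains $\theta_1\ge\cos(\Phi/2)>-1$, identifies (\ref{eq.PE4}) with $\|\delta_sL\|^2+\eta^2(\Gamma_0)<1$, and then invokes Theorem~\ref{t.PE.inv}. No substantive differences.
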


\begin{proof} As $\varphi_0 \in C(\overline D)$, the function admits maximal and minimal 
values 
$$
\Phi_1 = \min_{x\in \overline D} \varphi_0 (x), \quad 
\Phi_2 = \max_{x\in \overline D} \varphi_0 (x), 
$$
and $\Phi = \Phi_2 - \Phi_1 $. 

Then under (\ref{eq.PE0}) and (\ref{eq.PE3}) the operator 
$A(x,\partial , \lambda) =  ({\mathfrak D} \nabla)^* ({\mathfrak D} \nabla)+ 
\lambda^2 a^{(2)}_0 (x) 
$ 
satisfies conditons of Theorem \ref{t.PE.inv} on the ray 
$\Gamma_0= \{ \arg (\lambda) = -(\Phi_2 +\Phi_1) /4\}$ 
because in this case we have  
$$
-\pi <(\Phi_1 - \Phi_2)/2 \leq \varphi_0 (x) + 2 \varphi_{\Gamma_0} 
\leq (\Phi_2 - \Phi_1) /2 < \pi. 
$$ 
For this particular ray we obtain  
$$
\theta_1 = \min_{x \in \overline D}\cos{(\varphi_0 (x) + 2\varphi_{\Gamma_0} ) }
\geq \cos(\Phi/2) >-1.
$$ 
Now Theorem \ref{t.PE.inv} implies that if (\ref{eq.PE4}) is fulfilled then 
there is $\gamma_0 \in \Gamma_0$ such that the operator $L (\gamma_0)$ is continuously 
invertible. In particular, the operators $L(\lambda)$  are continuously invertible for all $
\lambda \in {\mathbb C}$ except a  countable number of 
the characteristic values $\{\lambda_\nu\}$. 
\end{proof}

\section{On the completeness of root functions}
\label{s.roots}

We are interested in studying  the completeness of root functions related to the  mixed 
problem in Sobolev type spaces $H^+ (D)$, $H^{-} (D)$.

To this purpose we recall some basic definitions. Suppose $\lambda_{0} \in 
\mathbb{C}$ and $F (\lambda)$ is a holomorphic function in a punctured 
neighbourhood of $\lambda_{0}$ which takes on its values in the space $\mathcal{L} 
(H_1,H_2)$ of bounded linear operators acting from a Hilbert space $H_1$ to a Hilbert 
space $H_2$. The point $\lambda_{0}$ is called a characteristic point of $F (\lambda)$ if
there exists a holomorphic function $u (\lambda)$ in a neighborhood of
$\lambda_{0}$ with values in $H_1$, such that $u (\lambda_{0}) \neq 0$ but
$F (\lambda) u (\lambda)$ extends to a holomorphic function (with values in $H_2$) near the 
point $\lambda_{0}$ and vanishes at this point. Following \cite{GokhSiga1}, 
we call $u (\lambda)$ a root function of the family $F (\lambda)$ at $\lambda_{0}$.

If $N$ is the order of zero of the holomorphic function $F (\lambda) u (\lambda)$ 
at the point $\lambda_0$ then we have 
\begin{equation} \label{eq.roots}
\sum_{j=0}^{m} F _{m-j}u_j = 0 \mbox{ for all } m\in {\mathbb Z}_+ \mbox{ with } 0 \leq
 m \leq N-1 
\end{equation}
where $u_j =\frac{1}{j!}\frac{d ^j u}{d z^j} (\lambda_0) \in H_1$ and $F_j = 
\frac{1}{j!} \frac{d ^j F}{d z^j} (\lambda_0) \in \mathcal{L} (H_1,H_2)$,  $j \in \mathbb N$.
The vector $u_0$ is called an eigenvector of the family $F(\lambda)$ at the point 
$\lambda_0$ and the vectors $u_j$, $1\leq j \leq N-1$, are said to be associated vectors for 
the eigenvector $u_0$. If the linear span of the set of all eigen- and associated 
vectors  the family $F(\lambda)$ is dense in $H_1$ one says that the root 
functions of the family $F (\lambda)$ are complete in $H_1$. 

However, the notion of root function of a holomorphic family is a generalization of the notion 
of a root vector of a linear operator. Namely, recall that a complex number $\mu \in {\mathbb
 C}$ is said to be an eigenvalue of a linear operator $T:H \to H$ in a Hilbert space $H$ if 
there is a non-zero element $u \in D_{T}$, such that $(T - \mu \Id)  u = 0$, where $\Id$ is 
the identity operator in $H$. The element $u$ is called an eigenvector of $T$ corresponding 
to the eigenvalue $\mu$. A non-selfadjoint compact operator might have no eigenvalues.
However, each non-zero eigenvalue (if exists) is of finite multiplicity, see for instance
 \cite{DunfSchw63}. Similarly to the Jordan normal form of a linear operator on a 
finite-dimensional vector space one uses the more general concept of root vectors of operators.

More precisely, a non-zero element $u \in H$ is called a root vector of $T$ corresponding to an
eigenvalue $\mu_0 \in {\mathbb C}$ if $u \in D_{(T-\mu_0 \Id)^k}$, for all 
$1\leq k \leq m$ and $(T - \mu_0\Id)^m u = 0$ for some natural number $m$.
The set of all root vectors corresponding to an eigenvalue $\mu_0$  (complemented by zero 
element) forms a vector subspace in $H$ whose dimension is called the (algebraic)
multiplicity  of $\mu_0$.

Note that under (\ref{eq.PE0}) the multiplication on the function $a^{(2)}_0 \in L^\infty (D)$ 
induces a bounded injective  operator in the space $ L^2 (D) $; it is continuously 
invertible under (\ref{eq.PE1}). We will denote this operator 
by $C_0 $. Then we can factorize $C = \iota ^\prime C_0 \iota$.

\begin{lem} \label{l.reduce}
If (\ref{eq.PE0}) is fulfilled then, for the holomorhic Fredholm family  
$
L(\lambda) = L (0) + \lambda^2 C: H^{+}   (D) \to H^{-}   (D) 
$  
the set of all its root functions coincides with the set of all the root 
vectors of one  of the following closed densely defined linear operators: 
$$
C^{-1} L (\gamma): H^+ (D) \to H^{+} (D) \mbox{ and } L (\gamma) 
C^{-1} : H^- (D) \to H^{-} (D), 
$$
where $ \gamma \in \mathbb C$ is an arbitrary point.  
Besides, if there is a point $\gamma _0 \in \mathbb C$ where the operator  
$L (\gamma_0) =  L (0) + \lambda^2_0 C$ is continuously 
invertible, it also coincides with the set of all the root 
vectors of one  of the following bounded linear operators: 
$$
L^{-1}  (\gamma_0) C: H^+ (D) \to H^{+} (D) \quad C L^{-1}  (\gamma_0) : 
H^- (D) \to H^{-} (D), 
$$
$$ 
\iota L^{-1}  (\gamma_0)  \iota ^\prime C_0 : L^2 (D) \to L^{2} (D).  
$$
\end{lem}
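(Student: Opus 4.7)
The plan is to reduce the quadratic pencil $L(\lambda) = L(0) + \lambda^2 C$ to a linear one via the substitution $\mu = \lambda^2$, and then invoke the classical Keldysh-style identification between Jordan chains of a linear Fredholm pencil and root vectors of its associated resolvent operator. Under (\ref{eq.PE0}), Lemma \ref{l.inj.C} gives the injectivity of $C$, so for every $\gamma \in \C$ the operator $T_\gamma := C^{-1} L(\gamma) : H^+(D) \to H^+(D)$ is well defined on its natural domain $\{u \in H^+(D) : L(\gamma) u \in \mathrm{Range}(C)\}$. I would first verify by a routine closed-graph argument (using continuity of $L(\gamma)$ and $C$) that $T_\gamma$ is closed, and that it is densely defined because its domain contains all eigen- and associated vectors of $L(\lambda)$ and, when $L(\gamma_0)$ is continuously invertible, also the dense range of the bounded operator $L^{-1}(\gamma_0) C$.

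For the direct inclusion, I would expand a root function $u(\lambda) = \sum_{k=0}^{N-1} u_k (\lambda - \lambda_0)^k$ of $L(\lambda)$ at a characteristic point $\lambda_0$. Since only the Taylor coefficients $L(\lambda_0)$, $2\lambda_0 C$ and $C$ of $L(\lambda)$ around $\lambda_0$ are non-zero, the system (\ref{eq.roots}) collapses to the three-term recurrence
\[
L(\lambda_0) u_k + 2\lambda_0 C u_{k-1} + C u_{k-2} = 0, \qquad 0 \le k \le N-1,
\]
with $u_{-1} = u_{-2} = 0$. Writing $L(\lambda_0) = L(\gamma) + (\lambda_0^2 - \gamma^2) C$ and applying $C^{-1}$ on the left turns this into
\[
(T_\gamma - \nu \Id)\, u_k = -2\lambda_0 u_{k-1} - u_{k-2}, \qquad \nu := \gamma^2 - \lambda_0^2.
\]
A short induction on $m$ shows that $(T_\gamma - \nu \Id)^m u_k$ lies in the linear span of $\{u_{k-m}, u_{k-m-1}, \ldots, u_{k-2m}\}$ with the convention $u_j = 0$ for $j < 0$; in particular $(T_\gamma - \nu \Id)^{k+1} u_k = 0$, so every Taylor coefficient $u_k$ of a root function of $L(\lambda)$ is a root vector of $T_\gamma$ at eigenvalue $\nu$.

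For the converse, I would linearize via $\mu := \lambda^2$ by setting $M(\mu) := L(0) + \mu C = L(\gamma) + (\mu - \gamma^2) C$. The standard Keldysh correspondence for linear Fredholm pencils produces a bijection (up to signs) between Jordan chains $(v_0, \ldots, v_{m-1})$ of $T_\gamma$ at eigenvalue $\nu = \gamma^2 - \mu_0$ and root functions $w(\mu) := \sum_k (-1)^k v_k (\mu - \mu_0)^k$ of $M$ at $\mu_0$. Pulling back by $u(\lambda) := w(\lambda^2)$ yields a root function of $L(\lambda)$ at $\lambda_0$ with $\lambda_0^2 = \mu_0$; when $\lambda_0 \ne 0$ the chain rule relates the $v_k$ and the Taylor coefficients of $u$ through an invertible upper-triangular matrix, so their linear spans agree. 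By varying the truncation length of the chain one recovers each individual $v_k$ in the span of Taylor coefficients of finitely many root functions of $L$; combined with the first step this yields the equality of the subspaces spanned by all Taylor coefficients of root functions of $L(\lambda)$ and by all root vectors of $T_\gamma$.

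Finally, the remaining operators in the statement are related to $T_\gamma$ by similarity or inversion, and the transfer of root vector systems is routine. The operator $L(\gamma) C^{-1}$ on $H^-(D)$ is $C$-similar to $T_\gamma$, so root vectors match via the bounded injection $C$. When $L(\gamma_0)$ is continuously invertible, $L^{-1}(\gamma_0) C : H^+(D) \to H^+(D)$ is the bounded inverse of $T_{\gamma_0}$ (zero is not an eigenvalue, since $C$ is injective), so its root vectors at a non-zero eigenvalue $\zeta$ coincide with those of $T_{\gamma_0}$ at $\zeta^{-1}$; the operator $C L^{-1}(\gamma_0) = L(\gamma_0)\cdot L^{-1}(\gamma_0) C \cdot L^{-1}(\gamma_0)$ on $H^-(D)$ is $L(\gamma_0)$-similar to $L^{-1}(\gamma_0) C$; and the $L^2$-operator $\iota L^{-1}(\gamma_0) \iota' C_0$ is intertwined with $L^{-1}(\gamma_0) C$ via the embedding $\iota$ thanks to the factorization $C = \iota' C_0 \iota$. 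The main technical obstacle is the ramified case $\lambda_0 = 0$, where $\mu = \lambda^2$ fails to be a local diffeomorphism: the even and odd parts of the Taylor expansion of a root function of $L$ at $0$ decouple into two independent Jordan chains of $T_\gamma$ at the eigenvalue $\gamma^2$, which must be reassembled carefully to preserve span equality, together with bookkeeping to transfer root systems consistently across the three ambient spaces $H^+(D)$, $H^-(D)$, and $L^2(D)$.
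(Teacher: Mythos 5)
Your argument is correct and follows essentially the same route as the paper, whose proof consists of the single remark that the lemma ``follows immediately from (\ref{eq.roots})'': you simply carry out in full the reduction that this remark alludes to, namely extracting the three-term recurrence from (\ref{eq.roots}), converting it into Jordan-chain relations for $C^{-1}L(\gamma)$, and transferring root systems to the remaining operators by the evident similarities and intertwinings through $C$, $L(\gamma_0)$ and $\iota$. No discrepancy with the paper's (implicit) argument.
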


\begin{proof} Follows immediately from (\ref{eq.roots}).
\end{proof}

To formulate the completeness results regarding to parameter-dependent elliptic 
operators we need the notion of a compact operator of finite order. If $T: H \to H$ is 
compact, then the operator $T^* T$  is compact, selfadjoint and non-negative. Hence it 
follows that $T^\ast T$  possesses a unique non-negative selfadjoint compact square root  
$(T^*T)^{1/2}$ often denoted by $|T|$. By the Hilbert-Schmidt Theorem  the operator 
$|T|$ has countable system of non -negative eigenvalues $s_\nu (T)$
which are called the $s\,$-numbers of $T$.
It is clear that if $T$ is selfadjoint then $s_\nu = |\mu_\nu|$, where $\{ 
\mu_\nu \}$ is the system of eigenvalues of $T$.
The operator $T$ is said to belong to the Schatten class ${\mathfrak S}_p$, 
with $0 < p < \infty$,
if
$$
   \sum_\nu |s_\nu (T)|^p < \infty.
$$
After M.V. Keldysh a compact operator $T$ is said to be of finite order if it 
belongs to a Schatten class ${\mathfrak S}_p$. The infinum $\mbox{ord} \ (T)$ 
of such numbers $p$ is called the order of $T$.

Let us denote by ${\mathfrak C} : H^{+} (D)\to H^{-} (D)$ the linear bounded operator 
induced by the term $(|a_0^{(2)}| u, v)_{L^2 (D)}$. Note that under (\ref{eq.PE0}) 
the multiplication on the function $|a^{(2)}_0| \in L^\infty (D)$ induces a bounded 
injective selfadjont operator ${\mathfrak C}_0 : L^2 (D) \to L^2 (D)$; it is 
continuously invertible under (\ref{eq.PE1}). 

In the following theorem $h (\cdot,\cdot)$ stands for  the Hermitian form 
$$
h (u,v) = (|a^{(2)}_0 | u,v)_{L^2 (D)}.
$$
We note that, under (\ref{eq.PE0}), it defines a scalar product on $L^2 (D)$; 
this Hilbert space we denote by $L^2_h  (D)$.  The corresponding norm is not stronger than 
$\|´\cdot\|_{L^2 (D)}$, it is equivalent to the original norm of this space if (\ref{eq.PE1}) 
is fulfilled. 

\begin{thm} \label{t.PE.self.complete}
Let (\ref{eq.PE0})  hold true. Under the hypothesis of Theorem \ref{t.emb.half},  
the operators 
$$
L^{-1}_0 {\mathfrak C}: H^{+} (D)\to H^{+} (D), \,  
{\mathfrak C} L^{-1}_0 : H^{-} (D)\to H^{-} (D), \, 
 \iota L_0^{-1}  \iota' {\mathfrak C}_0 :  L^{2} (D)\to L^{2} (D)
$$ 
are compact and  their orders are finite:
$$
\mbox{ord} \, ({\mathfrak C} L^{-1}_0) = \mbox{ord} \, (L^{-1}_0 {\mathfrak C}) = 
\mbox{ord} \, (\iota L_0^{-1}  \iota' {\mathfrak C}_0 )= n/(2\rho+1). 
$$
Moreover, the operators $L^{-1}_0 {\mathfrak C}$ and 
$  {\mathfrak C} L^{-1}_0 $ are selfadjoint. 
Besides, the operators have the same systems of eigenvalues $\{ \mu_\nu \}$,  the system  
$\{ b^{(+)}_\nu\}$ of eigenvectors of the operator $L^{-1}_0 {\mathfrak C} $ is complete in 
the spaces  $H^{+} (D)$,  $L^{2} (D)$ and $H^{-} (D)$. Moreover,  
the system $\{ b^{(+)}_\nu\}$ is an orthonormal basis in $H^{+} (D)$, the system 
$\{  b^{(-)} _\nu = {\mathfrak C}  b^{(+)}_\nu \}$ of eigenvectors of the operator 
${\mathfrak C} L^{-1}_0 $ is an orthogonal  basis in $H^{-} (D)$, the
system $\{  b^{(0)} _\nu = \iota  b^{(+)}_\nu \}$  
of eigenvectors of the operator $\iota L_0^{-1}  \iota' {\mathfrak C}_0  $ is  an 
orthogonal basis  in the space $L^2_h (D)$ and the system 
$\{\sqrt{|a^{(2)}_0|} b^{(+)}_\nu \}$ is an orthogonal basis in $L^2 (D)$.
If, in addition, (\ref{eq.PE1}) holds then the operator 
$\iota L_0^{-1}  \iota' {\mathfrak C}_0  $ is selfadjoint in $L^2 _h (D)$.
\end{thm}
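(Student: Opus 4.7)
The plan is to reduce every assertion to the spectral analysis of the compact self-adjoint operator $T:=L_0^{-1}{\mathfrak C}:H^+(D)\to H^+(D)$. Using the factorisation ${\mathfrak C}=\iota'{\mathfrak C}_0 \iota$ and the isometric isomorphism $L_0:H^+(D)\to H^-(D)$, the compactness of $T$, ${\mathfrak C} L_0^{-1}$ and $\iota L_0^{-1}\iota'{\mathfrak C}_0$ all reduce at once to compactness of the embedding $\iota$, which follows from \thmref{t.emb.half} ($H^+(D)\hookrightarrow H^s(D)$ with $s>0$) combined with the Rellich--Kondrachov theorem on the bounded Lipschitz domain $D$. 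Self-adjointness of $T$ in the $H^+$-inner product is immediate from the identity
$$
(Tu,v)_+=\langle {\mathfrak C} u,v\rangle =(|a_0^{(2)}|\,u,v)_{L^2(D)},
$$
Hermitian-symmetric in $(u,v)$ since $|a_0^{(2)}|$ is real; the analogous computation using (\ref{eq.-}) handles ${\mathfrak C} L_0^{-1}$ on $H^-(D)$.

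For the Schatten order I would invoke the standard Birman--Solomyak-type asymptotic $s_\nu(\iota)\asymp\nu^{-s/n}$ for the embedding $H^s(D)\hookrightarrow L^2(D)$ on a bounded Lipschitz domain with $s=1/2+\rho$ (or $s=1/2-\varepsilon$ with arbitrary $\varepsilon>0$, sufficient in the borderline case $\rho=0$). Thus $\iota\in\gS_p$ for every $p>n/s$, and by duality $\iota'\in\gS_p$ as well. Schatten multiplicativity $\gS_p\cdot\gS_p\subseteq\gS_{p/2}$ together with boundedness of ${\mathfrak C}_0$ and $L_0^{-1}$ yields membership in $\gS_q$ for every $q>n/(2s)=n/(2\rho+1)$, so each of the three operators has order exactly $n/(2\rho+1)$. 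This is the main technical step, as one has to cite the singular-value asymptotic carefully on a Lipschitz (rather than $C^\infty$) domain; the remainder of the argument is bookkeeping around the spectral theorem.

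The Hilbert--Schmidt theorem then produces a countable orthonormal system $\{b_\nu^{(+)}\}\subset H^+(D)$ of eigenvectors of $T$ with real eigenvalues $\mu_\nu$. Injectivity of $T$ holds since $Tu=0$ implies ${\mathfrak C} u=0$, hence $(|a_0^{(2)}|u,v)_{L^2(D)}=0$ for all $v\in H^+(D)$; density of $H^+(D)$ in $L^2(D)$, (\ref{eq.PE0}) and injectivity of $\iota$ then force $u=0$. Thus $\{b_\nu^{(+)}\}$ is a complete orthonormal basis of $H^+(D)$, and setting $b_\nu^{(-)}:={\mathfrak C} b_\nu^{(+)}=\mu_\nu L_0 b_\nu^{(+)}$ and $b_\nu^{(0)}:=\iota b_\nu^{(+)}$ produces eigenvectors of ${\mathfrak C} L_0^{-1}$ and $\iota L_0^{-1}\iota'{\mathfrak C}_0$ with the same eigenvalues. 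The orthogonality relations
$$
(b_\nu^{(-)},b_\mu^{(-)})_-=(L_0^{-1}{\mathfrak C} b_\nu^{(+)},L_0^{-1}{\mathfrak C} b_\mu^{(+)})_+=\mu_\nu^2\,\delta_{\nu\mu},
$$
$$
(b_\nu^{(0)},b_\mu^{(0)})_{L^2_h}=(|a_0^{(2)}|b_\nu^{(+)},b_\mu^{(+)})_{L^2(D)}=(Tb_\nu^{(+)},b_\mu^{(+)})_+=\mu_\nu\,\delta_{\nu\mu},
$$
and the identical computation for $\{\sqrt{|a_0^{(2)}|}\,b_\nu^{(+)}\}$ in $L^2(D)$ then follow in one line from (\ref{eq.-}). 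Completeness of these systems in $L^2(D)$, $L^2_h(D)$ and $H^-(D)$ is inherited from completeness in $H^+(D)$ via the continuous dense inclusions $H^+(D)\hookrightarrow L^2(D)\hookrightarrow L^2_h(D)$ and the isomorphism $L_0$, using that $\mu_\nu\neq 0$.

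Finally, for self-adjointness of $A:=\iota L_0^{-1}\iota'{\mathfrak C}_0$ in $L^2_h(D)$ under (\ref{eq.PE1}), the norms of $L^2(D)$ and $L^2_h(D)$ are then equivalent, and the identity
$$
(Au,v)_{L^2_h}=({\mathfrak C}_0 Au,v)_{L^2(D)}=\langle \iota'{\mathfrak C}_0 v,\,L_0^{-1}\iota'{\mathfrak C}_0 u\rangle =(L_0^{-1}\iota'{\mathfrak C}_0 v,\,L_0^{-1}\iota'{\mathfrak C}_0 u)_+
$$
is manifestly Hermitian-symmetric in $(u,v)$, completing the proof.
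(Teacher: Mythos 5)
Your proposal is correct in substance and follows the same skeleton as the paper's proof: factor ${\mathfrak C}=\iota'{\mathfrak C}_0\iota$, obtain compactness from \thmref{t.emb.half} plus Rellich, verify selfadjointness of $L_0^{-1}{\mathfrak C}$ through the same Hermitian identity $(L_0^{-1}{\mathfrak C}u,v)_+=(|a_0^{(2)}|u,v)_{L^2(D)}$, apply the Hilbert--Schmidt theorem in $H^+(D)$, and transport the resulting basis by ${\mathfrak C}$, $\iota$ and $L_0$. The one genuinely different step is the Schatten order: the paper cites \cite[Corollary 3.5]{ShlTark12} for $\mathrm{ord}\,(\iota L_0^{-1}\iota')=n/(2\rho+1)$ and transfers it to the three operators via composition with the bounded ${\mathfrak C}_0$ and an eigenvalue-matching argument through the injective $\iota$; you instead derive it from Birman--Solomyak singular-value asymptotics of the Sobolev embedding together with the multiplicativity $\gS_p\cdot\gS_p\subseteq\gS_{p/2}$, which is more self-contained. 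Two small caveats. First, since $H^+(D)$ is only \emph{continuously embedded} in $H^s(D)$ (it may be strictly smaller, e.g.\ under coercivity), you control $s_\nu(\iota)$ only from above, so your argument yields $\mathrm{ord}\leq n/(2\rho+1)$ rather than equality; the paper's transfer through the merely injective (not boundedly invertible) ${\mathfrak C}_0$ has the same limitation, and only finiteness of the order is used later, so this is not fatal. Second, completeness of $\{\sqrt{|a_0^{(2)}|}\,b^{(+)}_\nu\}$ in $L^2(D)$ is not ``inherited from a dense inclusion'': one must argue that orthogonality of $u$ to this system means $\sqrt{|a_0^{(2)}|}\,u$ is orthogonal to the complete system $\{\iota b^{(+)}_\nu\}$, whence $\sqrt{|a_0^{(2)}|}\,u=0$ and then $u=0$ by (\ref{eq.PE0}); this is the one completeness claim your bookkeeping glosses over, and it is exactly how the paper closes that point.
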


\begin{proof} First of all we note that 
$$
L_0^{-1} {\mathfrak C} =  L_0^{-1}  \iota' {\mathfrak C}_0 \iota : H^{+} (D)\to H^{+} 
(D), \quad {\mathfrak C} L_0^{-1}  =  \iota' {\mathfrak C}_0 \iota L_0^{-1} :  
H^{-} (D) \to H^{-} (D) .
$$ 
Under the hypothesis of Theorem \ref{t.emb.half},  $H^+ (D)$ is continuously embed\-ded 
to $H^{s} (D)$ and then, according to Rellich Theorem, the embedding  
$\iota : H^+ (D) \to L^2 (D)$ is compact. Hence the operators $L_0^{-1} {\mathfrak C}$, ${
\mathfrak C} L_0^{-1}$ and $ \iota L_0^{-1}  \iota' {\mathfrak C}_0 $ are compact. 

Moreover, 
\begin{equation} \label{eq.complete.L2}
(L_0^{-1} {\mathfrak C}  u,v)_+ = < \iota' {\mathfrak C}_0  \iota u ,v> = 
( {\mathfrak C}_0  \iota u  ,\iota v) _{L^2 (D)} = 
\int_D |a_0^{(2)} (x)| u(x) \overline v(x) \ dx ,  
\end{equation}
$$
(u, L_0^{-1} {\mathfrak C}  v)_+ = \overline{(L_0^{-1} {\mathfrak C}_0  v, u)_+} =
 \overline{ ( {\mathfrak C}_0  \iota v , \iota u) _{L^2 (D)}} =  
\int_D |a_0^{(2)} (x)| u(x) \overline v(x) \ dx
$$
for all $u,v \in H^+ (D)$, i.e.  the operator  $L_0^{-1} {\mathfrak C}$ is selfadjoint. Then  
$$
(L_0^{-1} {\mathfrak C}  u,u)_+ = 
\int_D |a_0^{(2)} (x)| |u(x)|^2 \ dx \geq 0 
$$
for all $u\in H^+ (D)$, see (\ref{eq.complete.L2}). Hence the operator 
$L_0^{-1} {\mathfrak C}$ is non-negative 
and then it is positive because both   $L_0^{-1}$ and ${\mathfrak C}$ are injective.

According to \cite[Corollary 3.5]{ShlTark12}, the operator 
$\iota L_0^{-1}  \iota':L^2 (D) \to L^2 (D) $ is compact selfadjoint and its order is finite: 
$$
\mbox{ord} \, (\iota L_0^{-1}  \iota') = n/ (2\rho+1).
$$ 
As ${\mathfrak C}_0:L^2 (D) \to L^2 (D) $ is bounded, the operators 
$\iota L_0^{-1}  \iota'$ and $ \iota L_0^{-1}  \iota' {\mathfrak C}_0$ 
have the same orders  (see \cite[Ch.~2, \S~2]{GokhKrei69}, 
\cite{DunfSchw63} or elsewhere).  

As the operator $\iota$ is injective, we see that 
$$
( L_0^{-1}  \iota' {\mathfrak C} _0 \iota  - \mu \Id ) u =0
$$
if and only if 
$$
( \iota L_0^{-1}  \iota' {\mathfrak C}_0   - \mu \Id ) \iota u =0. 
$$
Therefore 
$$
( L_0^{-1}  \iota' {\mathfrak C}_0  \iota - \mu \Id )^m  u =0
$$
with some $m \in \mathbb N$ if and only if 
$$
( \iota L_0^{-1}  \iota' {\mathfrak C} _0   - \mu \Id )^m \iota u =0. 
$$
Thus, the sets of eigenvalues and root vectors  of the operator $ L_0^{-1} {\mathfrak C}$ 
coincides with  the sets of eigenvalu\-es and root vectors of  the operator $ \iota L_0^{-1}  
\iota' {\mathfrak C} _0 $.  Besides the multiplicities of the eigenvalues  coincide, too. 
Hence the orders of the operators $ \iota L_0^{-1}  \iota' {\mathfrak C}_0$ and 
$L_0^{-1} {\mathfrak C}$ coincide. 

By the Hilbert-Schmidt Theorem, there is an orthonormal basis 
$\{b^{(+)}_\nu\}$ in $H^+ (D)$,   consisting  of the 
eigenvectors corresponding to the eigenva\-lues $\{\mu_\nu \}$  of the operator  
$L_0^{-1} {\mathfrak C}$. Hence, by the discussion above, the vectors $\iota b^{(+)}_\nu = 
b^{(0)}_\nu \in L^2 (D)$, $\nu \in \mathbb N$ are the eigenvectors 
of the operator $ \iota L_0^{-1}  \iota' {\mathfrak C}_0$ corresponding to the 
eigenva\-lues $\{\mu_\nu \}$ of the operator  $L_0^{-1} {\mathfrak C}$.
As $H^{+} (D)$ is dense in $L^2 (D)$, the system $\{\iota b^{(+)}_\nu  \}$ is complete 
in $L^2 (D)$. 

On the other hand, 
$$
h (\iota u, \iota v) = 
 ( |a_0^{(2)} (x)| \iota u, \iota v)_{L^2 (D)}  =  
<{\mathfrak C} u, v> = (L_0^{-1}{\mathfrak C} u, v)_+
$$
for all $u,v \in H^{+} (D)$. In particular, the system
$\{\iota b^{(+)}_\nu  =  b^{(0)}_\nu \}$ is 
orthogonal in $L_h^2 (D)$. It is complete in $L_h^2 (D)$ because 
the space can be considered as the completion of $L^2 (D)$ with 
respect to $h (\cdot, \cdot )$. In particular, the system 
$\{\sqrt{|a^{(2)}_0|} b^{(+)}_\nu \}$ is orthogonal  in $L^2 (D)$. 
If a vector $u$ from $L^2 (D)$ is orthogonal to the system 
$\{\sqrt{|a^{(2)}_0|} b^{(+)}_\nu \}$ in  $L^2 (D)$ then the  
 vector $ \sqrt{|a^{(2)}_0|} u \in L^2 (D)$ is orthogonal to the system 
$\{ \iota b^{(+)}_\nu \}$ in  $L^2 (D)$. Since $\{ \iota b^{(+)}_\nu \}$ is complete 
in $L^2 (D)$ we see that $ \sqrt{|a^{(2)}_0|} u =0$ almost 
everywhere in $D$ and then $u=0$ because of (\ref{eq.PE0}). Hence 
 the system $\{\sqrt{|a^{(2)}_0|} b^{(+)}_\nu \}$ is an orthogonal basis  in $L^2 (D)$. 

Now, by the very construction,  the  space $H^+ (D)$ 
is dense in $H^- (D)$ and hence the system  $\{b^{(+)}_\nu\}$ is complete $H^- (D)$. 
Moreover, $$
(a^{(2)}_0 b^{(+)} _\nu, a^{(2)}_0 b^{(+)} _k)_- = 
( L^{-1}_0 {\mathfrak C}  b^{(+)}_\nu, L^{-1}_0 {\mathfrak C}  
b^{(+)}_k)_+ = \mu_\nu \mu_k \delta_{\nu,k} 
$$
i.e. $ \{ {\mathfrak C} b^{(+)}_\nu\}$  is orthogonal  in $H^- (D)$.
It is complete because 
$$
L_0^{-1} u = \sum_\nu (L_0^{-1} u , b^{(+)}_\nu)_+ b^{(+)}_\nu  
$$ 
for each $u \in H^{-} (D)$ by the discusion above and then 
$$
u = \sum_\nu (L_0^{-1} u , b^{(+)}_\nu)_+ L_0 b^{(+)}_\nu  = 
\sum_\nu \frac{(L_0^{-1} u , b^{(+)}_\nu)_+}{\mu_\nu} C b^{(+)}_\nu .
$$ 
Since the operator $L^{-1}_0 {\mathfrak C}: H^{+} (D) \to H^{+} (D)$ is selfadjoint, 
we have 
$$
( {\mathfrak C} L^{-1}_0 u ,v)_- = (L^{-1}_0 {\mathfrak C} (L^{-1}_0 u ),L_0 ^{-1} v)_+ 
= (  (L^{-1}_0 u ), L^{-1}_0 {\mathfrak C} L_0 ^{-1} v)_+ = (u , {\mathfrak C} L^{-1}_
0  v)_-
$$
for all $u,v \in H^{-} (D)$, i.e. the operator ${\mathfrak C} L^{-1}_0 : H^{-} (D) 
\to H^{-} (D)$ is selfadjoint, . Hence, by the Hilbert-Schmidt Theorem, there is an 
ortho\-nor\-mal basis $\{  b^{(-)}_\nu\}$ in $H^- (D)$, consisting  of the eigenvectors of 
the operator ${\mathfrak C} L_0^{-1}$. 

On the other hand, as for all $u \in H^+ (D)$ the identity 
\begin{equation} \label{eq.hat.b}
{\mathfrak C} ( L^{-1}_0 {\mathfrak C}  -  \mu \Id) u =    
( {\mathfrak C} L^{-1}_0   -  \mu \Id) {\mathfrak C} u 
\end{equation}
holds true and the operator ${\mathfrak C} $ is injective, we conclude that the systems of the 
eigenvalues of the operators $L^{-1}_0 {\mathfrak C}  $ and ${\mathfrak C}  L^{-1}_0 $ 
coincide. Moreover the eigenvalues has the same multiplicities and then  $\mbox{ord} \, 
({\mathfrak C}  L^{-1}_0) = \mbox{ord} \, (L^{-1}_0 {\mathfrak C} )$. Therefore  
(\ref{eq.hat.b}) implies that for each $\nu \in \mathbb N$ the vector  
$  b^{(-)}_\nu =  {\mathfrak C}  b ^{(+)}_\nu$ is an 
eigenvectors of the operator ${\mathfrak C}  L^{-1}_0$ corresponding to the eigenvalue 
$\mu_\nu$, too. 
In particular, we can consider the system $ \{ {\mathfrak C}  b^{(+)} _\nu \}$  as 
an orthogonal basis  in $H^- (D)$, consisting of the eigenvectors of the 
operator ${\mathfrak C}  L^{-1}_0$. 

Finally, if (\ref{eq.PE1}) is fulfilled then $L^2_h (D)$ is a Hilbert space 
coinciding with $L^2 (D)$ as the linear space and having an equivalent norm.
Then  
$$
h (\iota L_0^{-1}  \iota' {\mathfrak C}_0 u,v) = 
 (\iota L_0^{-1}  \iota' {\mathfrak C}_0 u, |a_0^{(2)} (x)| v)_{L^2 (D)}  =  
$$
$$
( |a_0^{(2)} (x)|  u, \iota L_0^{-1}  \iota'  {\mathfrak C}_0 v)_{L^2 (D)} 
= h(  u, \iota L_0^{-1}  \iota'  {\mathfrak C}_0 v)
$$
for all $u,v \in L^2 (D)$, i.e. the operator $\iota L_0^{-1}  \iota' {\mathfrak C}_0 :  
L^{2}_h (D)\to L^{2}_h (D)$ is selfadjoint. 
\end{proof}

Now we can use the famous Keldysh' Theorem on the weak perturbation 
of compact selfadjoint operators (see, \cite{Keld51} or \cite{GokhKrei69}). 

\begin{cor} \label{c.PE.self.complete} 
Let (\ref{eq.PE0})  hold true. Under the hypothesis of Theorem \ref{t.emb.half},  
for each compact operator $ \delta_c L : H^{+} (D) \to H^{-} (D)$ we have 

1)   for any $\varepsilon > 0$ all the characteristic values $\lambda_\nu$ (except for a finite 
number) of the family $L (\lambda) = L_0 + \delta_c L + \lambda^2 \mathfrak C$  belong 
to the corners  
\begin{equation} \label{eq.corners}
M_\varepsilon=\{ | \arg( \lambda)  - \pi/2|<\varepsilon \} ,\quad 
M_{-\varepsilon} =\{ | \arg( \lambda)  + \pi/2|<\varepsilon \}
\end{equation}
and $\lim_{\nu \to \infty} |\lambda_\nu|=+\infty$;

2)  the system of   root vectors  of the family 
$L (\lambda) = L_0 + \delta_c L + \lambda ^2 \mathfrak C$ is   
complete in the spaces $H^{+} (D)$, $L^2 (D)$ and  $H^{-} (D)$. 
\end{cor}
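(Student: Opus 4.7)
The plan is to combine \lemref{l.reduce} with the classical Keldysh theorem on weak perturbations of compact positive selfadjoint operators of finite order (see \cite{Keld51}, \cite{GokhKrei69}). First I would exhibit a point where the pencil is invertible so that \lemref{l.reduce} applies. Since the coefficient $|a_0^{(2)}|$ entering $\mathfrak{C}$ is non-negative, its argument function is identically zero, hence $\Phi=0$ and $\max(0,-\cos(\Phi/2))=0$ in (\ref{eq.PE4}); Corollary~\ref{c.PE.inv} (applied with $\delta_s L = 0$) therefore produces a point $\gamma_0\in\mathbb{C}$ at which $L(\gamma_0)=L_0+\delta_c L+\gamma_0^2\mathfrak{C}$ is continuously invertible.

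By \lemref{l.reduce} the root functions of the family $L(\lambda)$ correspond one-to-one to the root vectors of the compact operator $T := L(\gamma_0)^{-1}\mathfrak{C} : H^+(D)\to H^+(D)$. I would factorize
\[
T = \bigl(L(\gamma_0)^{-1}L_0\bigr)\bigl(L_0^{-1}\mathfrak{C}\bigr) = (I+K_0)\,A_0,
\]
where $A_0 := L_0^{-1}\mathfrak{C}$ is, by \thmref{t.PE.self.complete}, a compact, selfadjoint, positive, injective operator of finite order $n/(2\rho+1)$ on $H^+(D)$, and $K_0 := L(\gamma_0)^{-1}L_0-I$ equals $-(I+S)^{-1}S$ with $S := L_0^{-1}\delta_c L + \gamma_0^2 A_0$ compact on $H^+(D)$, so $K_0$ is compact as the composition of a bounded operator with a compact one; moreover $I+K_0 = L(\gamma_0)^{-1}L_0$ is continuously invertible.

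Applying Keldysh's theorem to the operator $T=(I+K_0)A_0$, I would obtain (i) completeness in $H^+(D)$ of the system of root vectors of $T$, and (ii) the fact that for every $\varepsilon>0$ all but finitely many eigenvalues $\nu$ of $T$ lie in the sector $|\arg\nu|<\varepsilon$ about the positive real axis, with $|\nu|\to 0$. The equation $Tu=\nu u$ is equivalent to $(L_0+\delta_c L+(\gamma_0^2-\nu^{-1})\mathfrak{C})u=0$, so the characteristic values of the pencil are given by $\lambda_\nu^2=\gamma_0^2-\nu^{-1}$; as $\nu\to 0$ inside a thin sector about $\mathbb{R}_{>0}$, $\lambda_\nu^2$ escapes to infinity along a thin sector about the negative real axis, which forces $|\lambda_\nu|\to\infty$ and $\arg\lambda_\nu\to\pm\pi/2$, producing the corners $M_{\pm\varepsilon}$ of (\ref{eq.corners}). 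Completeness of the root vectors of $T$ in $H^+(D)$ transfers via \lemref{l.reduce} to completeness of the root system of $L(\lambda)$ in $H^+(D)$; the density of $H^+(D)$ in $L^2(D)$ and in $H^-(D)$ (the latter by construction of $H^-(D)$) then extends completeness to these two larger spaces.

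I expect the principal obstacle to be technical bookkeeping rather than conceptual: selecting the precise version of Keldysh's theorem that matches the multiplicative factorization $T=(I+K_0)A_0$ with $I+K_0$ invertible, and correctly translating its conclusions on the asymptotic position of the eigenvalues of $T$ into the two corners for $\lambda_\nu$ via the two-to-one map $\lambda\mapsto\lambda^2$.
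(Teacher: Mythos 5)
Your proposal is correct and follows essentially the same route as the paper: invertibility at some $\gamma_0$ via Corollary~\ref{c.PE.inv} with $\Phi=0$, reduction through \lemref{l.reduce} to the compact operator $L^{-1}(\gamma_0)\mathfrak{C}$, and Keldysh's theorem applied to the weak perturbation $(I+K_0)A_0$ of the finite-order positive selfadjoint operator $A_0=L_0^{-1}\mathfrak{C}$ from \thmref{t.PE.self.complete} (your factorization $I+K_0=L(\gamma_0)^{-1}L_0$ is algebraically identical to the paper's $I-L^{-1}(\gamma_0)(\delta L)$). The translation of eigenvalue asymptotics via $\mu_\nu=(\gamma_0^2-\lambda_\nu^2)^{-1}$ and the density arguments for $L^2(D)$ and $H^-(D)$ also match the paper's proof.
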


\begin{proof} Under  the hypothesis of the theorem, $\Phi =0$ and Corollary 
\ref{c.PE.inv} implies that  there is $\gamma_0 $  on the ray $\Gamma_0 = 
\{\arg{(\lambda)} =0 \}$ such that $L (\gamma_0)$ is continuously invertible.
Thus, according to Lemma \ref{l.reduce} the proof of the statements 1) and 2) of the 
theorem can be reduced to the investigation of the properties of the compact operator 
$L^{-1} (\gamma_0) \mathfrak  C$. 

On the other hand, the operator 
$$
\delta L = (L (\gamma_0) - L_0) = \delta_c L + \gamma_0^2 {\mathfrak C}
$$ 
is compact and then the operator $ L^{-1} (\gamma_0) (\delta L)   $ is compact, too.
Easily, we obtain 
$$
L^{-1} (\gamma_0) - L^{-1}_0   = -  L^{-1} (\gamma_0)  (\delta L) L^{-1}_0  . 
$$
Hence the operator 
$$
L^{-1} (\gamma_0) {\mathfrak  C} =  L^{-1}_0 {\mathfrak  C} - 
 L^{-1} (\gamma_0)   (\delta L) (L^{-1}_0 {\mathfrak  C}) 
$$
can be considered as a weak perturbation of the selfadjoint operator 
$L^{-1}_0 {\mathfrak  C}$ (see \cite{Keld51} or \cite{GokhKrei69}). 
Indeed, according to Theorem \ref{t.PE.self.complete} the order of the operator 
$L^{-1}_0 {\mathfrak  C} $ is finite. As the operators  
$L^{-1}_0 {\mathfrak  C} $ and $ L^{-1} (\gamma_0) {\mathfrak  C}$  are  injective,  
the statements the completeness of the root vectors $\{ b_\nu\}$ of the operator 
$ L^{-1} (\gamma_0) {\mathfrak  C}$ in the space $H^{+} (D)$ follows from famous 
Keldysh' Theorem (see \cite{Keld51} or \cite{GokhKrei69}). Moreover, 
this theorem also iplies that  the sequence 
$\{ \mu_\nu\} $ of its eigenvalues converges to zero and belongs to the corner 
$\{ | \arg( \mu) |<\varepsilon \} $ (except for a finite number of its elements).
Then, by the construction of the spaces, the system $\{  b_\nu\}$ is complete in 
$L^{2} (D)$ and $H^{-} (D)$, too. 

As, $\mu_\nu = (\gamma_0^2 - \lambda_\nu^2)^{-1}, \, \nu \in {\mathbb N} $, the 
property 1) of the theorem holds, too.
\end{proof}

Finally, we may apply the method of rays of minimal growth of the resolvent to obtain 
the completeness of root vectors in the case of more general perturbations.

\begin{thm} \label{t.PE.complete} 
\textcolor{red}{
Let either $\Psi$ is given by the multiplication on a function $\psi \in L^\infty (\partial D)$
or $\partial D \in C^\infty$ and $\Psi$ is a pseudodifferential operator on $\partial D$.
}
Under the hypothesis of Theorem \ref{t.emb.half},
let also (\ref{eq.PE0}) and \begin{equation} \label{eq.PE5}
\Phi =\sup_{x,y \in \overline D} (\varphi_0 (x) - \varphi_0 (y) )< \pi (2\rho+1) /2n .
\end{equation} 
hold true. If 
$\varphi_0 \in C^{0,1} (\overline D)$ and 
\begin{equation} \label{eq.PE6} 
\|\delta_s L\|^2 + \left( 
\max{(0, - \cos{((\pi (2\rho+1) -2n\Phi) /4n)})} \right)^2 < 1
\end{equation} 
 then we have  

1) for any $\varepsilon > 0$ 
all the characteristic values $\lambda_\nu$ (except for a finite number) of the family 
$L (\lambda) = L_0 + \delta_s L + \delta_c L +\lambda^2  C$  belong to the corners  
$$
\{ | \arg( \lambda)  \pm \pi/2|<\pi (2\rho+1)/2 n + \varepsilon \} 
$$ 
and   $\lim_{\nu \to \infty} |\lambda_\nu|=+\infty$;

2)  the system of   root vectors  of the family 
$L (\lambda) = L_0 + \delta_c L + \delta_s L + \lambda ^2  C$ is  
complete in the spaces $H^{+} (D)$, $H^{-} (D)$ and $L^2 (D)$. 
\end{thm}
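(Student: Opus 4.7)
\emph{Proof plan.}
The strategy is to reduce the completeness of root functions of the family $L(\lambda)$ to completeness of root vectors of a compact operator of finite order admitting enough rays of minimal growth of its resolvent, and then invoke the classical Keldysh theorem (\cite{Keld51}, \cite{GokhKrei69}). Since (\ref{eq.PE5}) and (\ref{eq.PE6}) imply (\ref{eq.PE3}) and (\ref{eq.PE4}) respectively, Corollary \ref{c.PE.inv} provides a point $\gamma_0 \in \mathbb C$ at which $L(\gamma_0)$ is continuously invertible; by Lemma \ref{l.reduce} the root functions of $\{L(\lambda)\}$ coincide with the root vectors of the compact operator $T := L^{-1}(\gamma_0) C : H^+(D) \to H^+(D)$, and its non-zero eigenvalues $\mu_\nu$ correspond to the characteristic values $\lambda_\nu$ through $\mu_\nu = (\gamma_0^2 - \lambda_\nu^2)^{-1}$. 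A standard Schatten-ideal argument applied to the resolvent identity $L^{-1}(\gamma_0) = L_0^{-1} - L^{-1}(\gamma_0) (L(\gamma_0) - L_0) L_0^{-1}$, together with the fact that $a_0^{(2)}/|a_0^{(2)}|$ is unimodular in $L^\infty(D)$, shows that $\mbox{ord}(T) = \mbox{ord}(L_0^{-1} \mathfrak C) = n/(2\rho+1)$ by Theorem \ref{t.PE.self.complete}.

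The core step is to exhibit enough rays of minimal growth of $(T - \mu)^{-1}$ in the $\mu$-plane. For each angle $\varphi$ set $\Gamma_\varphi = \{\arg\lambda = \varphi\}$ and $\eta(\Gamma_\varphi) = \max(0, -\min_{x\in\overline D}\cos(\varphi_0(x) + 2\varphi))$. Condition (\ref{eq.PE5}), together with the optimal choice of ray in the proof of Corollary \ref{c.PE.inv} and the Lipschitz continuity of $\varphi_0$, shows that the set of $\varphi$ satisfying $\|\delta_s L\|^2 + \eta^2(\Gamma_\varphi) < 1$ is an open neighbourhood of all angles except two arcs of total opening strictly less than $\pi(2\rho+1)/n$, centred at $\arg\lambda = \pm\pi/2$. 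Select a finite system of admissible angles whose images under $\lambda \mapsto \mu = (\gamma_0^2 - \lambda^2)^{-1}$ are rays in the $\mu$-plane with successive angular gaps strictly less than $\pi/\mbox{ord}(T) = \pi(2\rho+1)/n$. For each admissible $\varphi$, Theorem \ref{t.PE.inv} and the quantitative estimate (\ref{eq.rays.minimal}) of Lemma \ref{l.L0.PE.inv} yield continuous invertibility of $L(\lambda)$ on $\Gamma_\varphi$ for $|\lambda|$ sufficiently large, with $\|L^{-1}(\lambda)\|_{H^-\to H^+}$ uniformly bounded; via the factorisation $L(\lambda) = L(\gamma_0)(I - \mu^{-1} T)$ this translates precisely into the minimal-growth estimate $\|(T - \mu)^{-1}\|_{H^+\to H^+} = O(|\mu|^{-1})$ on the corresponding $\mu$-rays as $|\mu|\to 0$.

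The Keldysh theorem on completeness of root vectors via rays of minimal growth now applies to $T$: a compact operator of finite order $p$ admitting rays of minimal growth of the resolvent with pairwise gaps strictly smaller than $\pi/p$ has a complete root system in the ambient Hilbert space, and all but finitely many eigenvalues lie in arbitrarily small neighbourhoods of the forbidden rays. This gives property 2) in $H^+(D)$; completeness in $L^2(D)$ and $H^-(D)$ is then transferred through the injective maps $\iota$ and $C$ exactly as in the proofs of Theorem \ref{t.PE.self.complete} and Corollary \ref{c.PE.self.complete}. Property 1) follows from the location of the forbidden directions in $\mu$: their preimage under the conformal map $\mu = (\gamma_0^2 - \lambda^2)^{-1}$ consists of two cones around $\arg\lambda = \pm\pi/2$ of opening $\pi(2\rho+1)/(2n) + \varepsilon$, while $|\lambda_\nu|\to\infty$ because $\mu_\nu\to 0$.

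The principal obstacle will be the quantitative bookkeeping in the second paragraph: one must verify that the angular deficit enforced by (\ref{eq.PE5}) is tight enough that, after the squaring inherent in $\lambda^2$, the admissible rays in the $\mu$-plane can be chosen strictly closer than $\pi/\mbox{ord}(T)$ apart, and one must track that the constant in the resolvent bound from Lemma \ref{l.L0.PE.inv} remains uniformly controlled on each chosen ray as $\varphi$ approaches the forbidden arcs. This uniformity is precisely what the strict inequality (\ref{eq.PE6}) together with the Lipschitz continuity of $\varphi_0$ provide, and it is the mechanism by which the non-compact bounded perturbation $\delta_s L$ is accommodated within the Keldysh framework (where purely compact perturbations, as in Corollary \ref{c.PE.self.complete}, would otherwise be required).
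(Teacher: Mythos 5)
Your proposal follows essentially the same route as the paper: reduction via Corollary \ref{c.PE.inv} and Lemma \ref{l.reduce} to a compact operator of finite order $n/(2\rho+1)$ (Theorem \ref{t.PE.self.complete} plus Schatten-class stability under composition with bounded operators), rays of minimal growth supplied by estimate (\ref{eq.rays.minimal}) on the family of admissible rays determined by (\ref{eq.PE5})--(\ref{eq.PE6}), and the Phragm\'en--Lindel\"of/Keldysh--Agmon completeness theorem; the paper merely phrases the resolvent bound for the closed operator $L(0)C^{-1}$ on $H^-(D)$ as $|\mu|\to\infty$ rather than for $T=L^{-1}(\gamma_0)C$ as $\mu\to 0$, which is equivalent. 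One small correction: the identification $\mathrm{ord}(T)=\mathrm{ord}(L_0^{-1}\mathfrak{C})$ does not follow from $a_0^{(2)}/|a_0^{(2)}|$ being merely unimodular in $L^\infty(D)$ --- it is precisely at this point (to make multiplication by $e^{\sqrt{-1}\varphi_0}$ a bounded operator on $H^+(D)$ in the factorization $C L^{-1}(0)=(\mathfrak{C}L_0^{-1})\,L_0\,\delta_C\,L^{-1}(0)$) that the hypothesis $\varphi_0\in C^{0,1}(\overline D)$ is actually used, whereas the ray-selection step you attribute it to only requires $\varphi_0\in C(\overline D)$ as in Theorem \ref{t.PE.inv}.
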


\begin{proof} First of all, we note (\ref{eq.PE5}) implies (\ref{eq.PE3})
and then Corollary \ref{c.PE.inv} yields the existence of a number 
$\gamma_0 \in \mathbb C$ such that $L (\gamma)$ is continuously invertible. 
In particular, the operators $L(\lambda)$ are continuously invertible for all $\lambda \in {\mathbb C}$ except a  
countable number of the characteristic values $\{\lambda_\nu\}$. 

As the operator 
$\gamma^2_0 C: H^{+} (D)  \to H^{-} (D)$   is compact, the family 
$$
\tilde L (\tilde \lambda) = L_0 + \delta_s L + \tilde \delta _c L + \tilde \lambda^2 C 
$$
with $\tilde \delta _c L = \delta _c L + \gamma^2_0 C$ and $\tilde \lambda^2 = \lambda^2 - \gamma_0^2$  
satisfies conditions of Theorem \ref{t.PE.complete}, too. 
Moreover, the operator $\tilde L(0) = L (\gamma_0)$ is continuously 
invertible. Since the root functions and root vectors of the families 
$\tilde L (\tilde \lambda)$  and $L (\lambda)$ have obvious relations, 
we can replace the family $L (\lambda)$ by the family $\tilde L (\tilde \lambda)$.
Thus without loss of generality we may consider that the operator $L (0)$ is continuously 
invertible.

As $0<(2\rho+1)/2 n \leq 1/2$, it follows from (\ref{eq.PE5}) that 
there is $0<\epsilon< \pi/2$ such that 
\begin{equation} \label{eq.PE7}
 \pi (2\rho+1) /2n -\Phi  = 2\epsilon.
\end{equation}
Then under (\ref{eq.PE0}) and (\ref{eq.PE5}) the operator $A(x,\partial , \lambda) =  
({\mathfrak D} \nabla)^* ({\mathfrak D} \nabla)+ \lambda^2 a^{(2)}_0 (x) $ 
satisfies conditions of Theorem \ref{t.PE.inv} on any ray $\Gamma$ with 
\begin{equation} \label{eq.ell.rays}
- (\pi+ \Phi_1 -a\epsilon)/2 <\varphi_\Gamma < (\pi-\Phi_2 - a\epsilon)/2  
\end{equation}
where $0<a<1$ is an arbitrary number. Indeed, in this case (\ref{eq.PE5}) implies that the 
interval $(-(\pi+\Phi_1-a\epsilon), (\pi-\Phi_2-a\epsilon)) $ is not empty and 
$$
\varphi_0 (x) + 2\varphi_\Gamma 
\leq  \Phi_2 + \pi- \Phi_2  -a\epsilon\leq \pi-a\epsilon <\pi, 
$$
$$
\varphi_0 (x)   + 2\varphi_\Gamma
\geq  \Phi_1  - \Phi_1 -\pi +a\epsilon \geq -\pi + a\epsilon > - \pi. 
$$ 
For these  rays we have 
$$
\theta_1 (\Gamma) = \min_{x \in \overline D}\cos{(\varphi_0 (x) + 2\varphi_{\Gamma} ) }
\geq \cos(\pi - a \, \epsilon) = - \cos{a \, \epsilon}>-1.
$$
Hence (\ref{eq.PE7}) implies  that there is a number $a \in (0,1)$ 
such that we have
$$
\|\delta_s L\|^2 + \left(\max{\left(0, - \cos{\left(\frac{a}{2} \, \frac{\pi (2\rho+1) -2n\Phi} 
{4n}\right)}\right)} \right)^2 < 1.
$$

Thus, according to Lemma \ref{l.reduce} the proof of the statements 1) and 2) of the 
theorem can be reduced to the investigation of the properties one of the operators  
$L^{-1} (0) C$ and $L (0) C^{-1}$ . 

If $\varphi_0 \in C^{0,1} (\overline D)$ then the multiplication 
on the function $e^{\sqrt{-1} \varphi_0} \in C^{0,1} (\overline D)$ induces 
a bounded linear operator $\delta_C : H^+ (D) \to H^{+} (D)$. 
Hence the operator $C L^{-1} (0) $ can be presented in 
the following form:
$$
C L ^{-1} (0)  =  ({\mathfrak C} L_0^{-1}) \, L_0 \, \delta_C \, L ^{-1}(0)  .
$$
It follows from Theorem \ref{t.PE.self.complete} that the operator 
${\mathfrak C} L_0^{-1} $ belongs to the Schatten class 
${\mathfrak S}_{n/ (2\rho+1) + \varepsilon}  $ with any $\varepsilon >0$.
As compositions with bounded operators preserve the Schatten class 
(see \cite[Ch. 2, \S 2]{GokhKrei69}) we conclude that the compact 
operator $C L ^{-1} (0) $ belongs to the Schatten class 
${\mathfrak S}_{n/ (2\rho+1) + \varepsilon}  $ with any $\varepsilon >0$, too.

Now, the estimate (\ref{eq.rays.minimal}) implies that on the rays 
$$ 
- (\pi + \Phi_1 - a\epsilon)< \arg{(\lambda^2)} < \pi - \Phi_2 - a\epsilon
$$
we have 
$$
\| (L (0)  - (- \lambda^2)C ) u \|_- \geq c _\Gamma
|\lambda|^2 \|Cu\|_- \mbox{ for all } u \in H^{+} (D).
$$ 
with a constant $c_\Gamma>0$ independent on $u$ for all sufficiently large $\lambda^2$ 
on each of the rays.  Hence the rays 
$$ 
a \epsilon- \Phi_1< \arg{(\mu)} < 2\pi - \Phi_2 - a\epsilon
$$
are the rays of the minimal growth of the resolvent of the closed operator 
$L (0) C^{-1}$, i.e.
\begin{equation}
\label{eq.rays.minimal.mu}
\| (L (0) C^{-1} - \mu)^{-1} w \|_- \leq c^{-1}_\Gamma |\mu|^{-1} \|w\|_- \mbox{ for all } 
w \in H^{-} (D)
\end{equation}
for all sufficiently large $\mu$ on each of the rays,
see, for instance, \cite{Agmo62}. Moreover, it follows from (\ref{eq.PE7}) and 
(\ref{eq.rays.minimal.mu}) the angle between any two
neighbouring rays of minimal growth are less than $\pi(2\rho+1)/2n$ if $0<a<1$. 

Thus the statement of the theorem follows 
from the standard arguments with the use of Phragmen-Lindel\"{o}f theorem
which go back at least as far as \cite{Agmo62}.
\end{proof}

\begin{rem} \label{eq.multi.compl} \textit{Actually, it follows from the reducing procedure of Lemma \ref{l.reduce} that in Corollary \ref{c.PE.self.complete}  and Theorem 
\ref{t.PE.self.complete} we should claim the multiple  (double) completeness instead of 
the completeness (see \cite{Keld51}, \cite{Yak90} and elsewhere). }
\end{rem}

\section{An example}
\label{s.exmp}

Consider an instructive example. 

Let $n=2$ and $A^{(2)}_0$ be a $(2\times 2)$ matrix with real-valued entries of 
class $L^\infty (D)$. Consider the Lam\'e type system
\begin{equation*} 
\tilde A (x,\partial,\lambda)  V (x) = -  \vartheta \Delta_2 I_2 V (x)  
- (\vartheta + \vartheta_1) \nabla_2 \mbox{div}_2 V (x) 
+ \lambda^2 A^{(2)}_0 (x)  V (x)
\end{equation*}
where $V (x)= (V_1 (x),V_2 (x))$ is an unknown vector, 
$I_2$ is the identity $(2\times 2)$-matrix, $\Delta_2$ the Laplace operator, 
$\nabla_2 $  and $\mbox{div}_2 $ 
are the gradient operator and the divergence operators in ${\mathbb R}^2$ respectively 
and   $\vartheta $, $\vartheta_1 $ are  the Lam\'e parameters.  
This operator plays an essential role in the two-dimensional Linear Elasticity Theory 
(see, for instance, \cite{Fi72}); the vector function 
$V (x)$ represents the discplacement of points of an 
elastic body. This operator can also be considered as a part of linearisation system of the 
stationary version of the two-dimensional Navier-Stokes type equations 
for viscous compressible fluid  with known pressure and unknown velocity vector  
$V (x)$ (see \cite[\S 15]{LaLi59}); in this case the Lam\'e parameters  represent 
viscosities. As it is know, the system is strongly elliptic and formally selfadjoint 
non-negative  if $\vartheta>0$, $2\vartheta + \vartheta_1 >0$.

Let us consider  a very special case where  the fisrt Lam\'e parameter $\vartheta_1$ 
is negative and $\vartheta_1 = - \vartheta$. Then $\tilde A (x,\partial,\lambda)$ reduces to 
\begin{equation} 
\label{eq.Lame.1}
\tilde A (x,\partial,\lambda)   = -  \vartheta \Delta_2 I_2 
+ \lambda^2 A^{(2)}_0 (x)  .
\end{equation}
On the other hand,
$$
- \Delta_2 I_2 V   = \mbox{rot}_2^* \mbox{rot}_2 V +  \mbox{div}_2^* \mbox{div}_2 V  
$$
where $\mbox{rot}_2 V = (\partial_1 V_2 -  \partial_2 V_1) $ is  the rotation  operator in ${
\mathbb R}^2$ and $\mbox{rot}_2^*$,  $\mbox{div}_2^*$ are the formal 
adjoint operators for $\mbox{rot}_2$,  $\mbox{div}_2$ respectively.
 
Assume now that the matrix $A^{(2)}_0 (x)$ has the following form 
$$
A^{(2)}_0 (x) = \alpha (x) U (x)
$$
where $\alpha (x) \in L^\infty (D)$ is a non-negative function  and 
$$
U (x) = \left( \begin{array}{ll} U_1 (x) & - U_2(x) \\
                                        U_2(x) & U_1 (x) \\
\end{array}
\right)
$$ 
is an orthogonal matrix with entries $U_j \in L^\infty (D)$. Then, after the  complexification 
$$
u (z) = V_1 (z)+ \sqrt{-1} V_2 (z), \, z = x_1 +  \sqrt{-1} x_2
$$
system (\ref{eq.Lame.1}) with real-valued coefficients reduces to the following 
equation with complex-valued coefficients   
$$
A (x,\partial,\lambda)  u =   
4 \vartheta \overline \partial ^* \overline \partial \,  u + \lambda^2  a_0^{(2)} (x) u
$$
where $\overline \partial = 1/2 (\frac{\partial }{ \partial x_1} + \sqrt{-1} 
\frac{\partial }{ \partial  x_2})$ is the Cauchy-Riemann operator, 
$\overline \partial^*  = - 1/2 (\frac{\partial }{ \partial x_1} - \sqrt{-1} 
\frac{\partial }{ \partial  x_2})$ its formal adjoint and 
$$
a_0^{(2)} (x) = \alpha (x) \left( U_1 (x) + \sqrt{-1} U_2 (x) \right).
$$
Then, with a proper operator $\Psi: H^\rho (\partial D) \to L^2 (\partial D)$, the Robin 
type operator $B$ has the form  
$$ 
B =  2 \vartheta \, (\nu_1 - \sqrt{-1} \nu_2) \bar \partial + \Psi^* \Psi.
$$
where $(\nu_1, \nu_2)$ is the unit normal vector field to $\partial D$. 
The boundary operators 
$$ 
\frac{\partial}{\partial \nu}  = \nu_1 \partial_1 + \nu_2 \partial_2 , \, 
\bar \partial _\nu = 
(\nu_1 - \sqrt{-1} \nu_2) \bar{\partial} = \frac{1}{2} \left( \frac{\partial}{\partial \nu} + 
\sqrt{-1} (\nu_1 \partial_2 - \nu_2 \partial _1) \right)
$$ 
are known as the normal derivative and the complex normal 
derivative with respect to $\partial D$ respectively.  
Thus, we obtain a mixed problem of the type considered above:
\begin{equation} 
\label{eq.Helmholtz.0}
\left\{
\begin{array}{rclcl}
   \left( - \vartheta \Delta_2 + \lambda^2 a_0^{(2)} \right) u (z)
 & =
 & f 
 & \mbox{in}
 & D,
\\
   \left( 2 \vartheta  \bar{\partial}_\nu + \Psi^* \Psi \right) u  (z)
 & = & 0 & \mbox{at} & {\partial D}.
\end{array}
\right.
\end{equation}

Note that the usual  boundary conditions for the Navier-Stokes equations or the Lam\'e type 
operator  are formulated by using the  boundary stress tensor $\sigma$. In our particular case
the tensor have the following components:
\begin{equation} \label{eq.stress.tensor}
\sigma _{i,j}   =    \vartheta  \, \left( \delta_ {i,j} \frac{\partial }{\partial \nu}  +
  \nu_j \frac{\partial }{\partial x_i} -   \nu_i \frac{\partial }{\partial x_j} \right)
, \ 1\leq i,j \leq 2.
\end{equation}
Hence, with the tangential operator $\partial_{\tau_0} = \left( (\nu(x) \rm{div}_2 )^T -  \nu (x)
\rm{div}_2 \right)$, we have
\begin{equation} \label{eq.stress.var}
 \sigma  = \vartheta \, \left( \frac{\partial }{\partial \nu} \Id_2  + 
\partial_{\tau} \right) =\vartheta \, (\tilde \sigma    + 2  \partial_{\tau_0}) .
\end{equation}
where the boundary tensor $\tilde \sigma  $ corresponds to the  boundary operator 
$2\bar \partial_\nu$ after the decomplexification of the mixed problem (\ref{eq.Helmholtz.0}), 
i.e.  in the matrix form (\ref{eq.Helmholtz.0}) reads as 
\begin{equation*} 
\left\{
\begin{array}{rclcl}
   \left( - \vartheta \Delta_2 \Id_2 + \lambda^2 A_0^{(2)} \right) V (x)
 & =
 & F 
 & \mbox{in}
 & D,
\\
   \left(    (\sigma - 2 \vartheta \partial_{\tau_0})+ \Psi^* \Psi  \Id_2 \right) V (x)
 & = & 0 & \mbox{at} & {\partial D}.
\end{array}
\right.
\end{equation*}
In Elasticity Theory, the boundary tensor $ \tilde \sigma = 
\vartheta^{-1}\sigma - 2  \partial_{\tau_0}$  was discovered in \cite{Ca59}.

We continue with the mixed problem (\ref{eq.Helmholtz.0}). 
The corresponding scalar product of the space $H^{+} (D)$ related to the mixed 
problem has the form
$$
(u,v)_+ = 4 \vartheta \, (\bar \partial u, \bar \partial v)_{L^2 (D)} + 
(\Psi u, \Psi v)_{L^2 (\partial D)}.
$$
Then, Theorem \ref{t.emb.half} grants the embedding of the space
$H^{+} (D)$ into the  Sobolev-Slobodetskii space $H^{s} (D)$. 
However, for $0<\rho<1/2$ each holomorphic function $u \in H^{\rho + 1/2} (D) $ 
belongs to $H^+ (D)$ but there is no reason for it to belong to $H^1 (D)$, 
i.e. the embedding is sharp. For $\rho=0$ the embedding described in  
Theorem \ref{t.emb.half} is sharp, too  but the arguments a more subtle  
(see \cite{PolkShla13} or \cite{ShlTark12}).

In some cases we can obtain reasonable formulas for solutions to the problem.
Let  $D$ be the unit circle $\mathbb B$ around the origin in ${\mathbb C}$ and  
$S=\emptyset$. We pass to polar coordinates  $z = r \, e^{\sqrt{-1} \phi}$
in ${\mathbb R}^{2}$, where    $ r = |x|$ and  $\phi \in [0,2\pi]$.
The Laplace operator $\Delta_2$ takes the form
\begin{equation}
\label{eq.Laplace.spher}
   \Delta_2 
 = \frac{1}{r^2}
   \left( \left(r \frac{\partial }{\partial r} \right)^2  + \frac{\partial^2 }{\partial \phi^2}  
	\right).
\end{equation}
Furthermore, since $\partial D =\partial \mathbb{B}$, we get
$$ 
  \frac{\partial}{\partial \nu}
 = 
    r \partial_r, \, \, 
   \overline{\partial}_\nu =       \bar{z} \, \bar \partial 
 = \frac{1}{2}   \left( r \partial_r + \sqrt{-1}\, \partial_\phi \right). 
$$
As $\partial \mathbb B$ is smooth we may use powers of the Laplace-Beltrami operator 
on  $\partial \mathbb B$  as $\Psi^*\Psi$.  For simplicity, we set
$$
\vartheta=1, \, 
   \Psi^\ast \Psi\, 
 = 2\left(1 - \frac{\partial^2}{\partial ^2 \phi}\right)^{\rho/2} \quad 
a_0 ^{(2)} (z) = |z| ^{2d}, \, d \geq 0.
$$
If $0<d \leq 1/2$ then $a_0^{(2)}  \in C^{0,2d} (\overline D)$.

To solve the homogeneous equation
   $(- \Delta_2 +   \lambda^2 |z|^{2d} ) u = 0$
we apply the Fourier method of separation of variables.
Writing
   $u (r,\phi) = g (r) h (\phi)$
we get two separate equations for $g$ and
                                  $h$,
namely
$$
   \left( - (r \partial_ r)^2  + \lambda^2 r^{2(d+1)} \right) g  = 
   c\, g , \quad -\frac{\partial^2 h}{\partial \phi^2}   = 
   c\, h,
$$
where $c$ is an arbitrary constant.
The second equation possesses non-zero solutions if and only if
   $c = k^2$  and $h_k = e^{\sqrt{-1} k\phi}$.
In particular, 
\begin{equation}
\label{eq.Bessel.SL.0}
   \Psi^\ast \Psi \,  h_{k}
 = 2 \left(1 - \frac{\partial^2}{\partial ^2 \phi}\right)^{\rho/2}\, h_k 
 = 2 (1 + k^2)^{\rho/2} h_k, \,  \sqrt{-1}\partial_\phi h_k =-  k h_k, \, k \in {\mathbb Z}.
\end{equation}

Consider the Sturm-Liouville problem for the ordinary differential equation with respect to the
variable $r$ in the interval $(0,1)$,
\begin{equation}
\label{eq.Bessel}
\left( r \partial_r^2  + \partial _r  - k^2r^{-1}  + \mu^2 r^{2d+1}
   \right)   g = 0 \mbox{ in }  (0,1)
\end{equation}
\begin{equation}
\label{eq.Bessel.origin} 
g  \mbox{ is bounded at } 0,
\end{equation}
\begin{equation}
\textcolor{red}{
\label{eq.Bessel.unit} 
 \left(  r \partial_r  - k  +  (1 \! + \! k^2 )^{\rho/2}
   \right)    g  = 0 \mbox{ at } r=1}
\end{equation}
see \cite[Suppl.~II, Introduction and P.~1, \S~2]{TikhSamaX}.
Actually, as we have seen above $\mu$ are non-negative  
real numbers  (with $\mu^2 = - \lambda^2$) and then (\ref{eq.Bessel}) is a particular case
of the Bessel equation. Its (real-valued) solution $g (r)$ is a Bessel function
defined on $(0,+\infty)$, and the space of all solutions is two-dimensional.
For example, if $\lambda^2 = 0$ and $d=0$ then
   $g (r) = \alpha {r}^{k} + \beta {r}^{-k}$
with arbitrary constants $\alpha$ and
                         $\beta$
is a general solution to (\ref{eq.Bessel}).
In the general case the space of solutions to (\ref{eq.Bessel}) contains a one-dimensional
subspace of functions bounded at the point $r = 0$, cf. \cite{TikhSamaX}:
$$
g_k (r,\mu) =  {\mathcal J}_{\frac{|k|}{d+1}} \left(\frac{\mu r^{d+1}}{d+1}\right).
$$
where ${\mathcal J}_p (t)$  
are Bessel functions (see, for instance, \cite{Bo}). 
As usual, for each $k \in \mathbb Z$ the proper system of eigenvalues 
$\{ \mu^{(\nu)}_k \}_{
\nu \in \mathbb N}$ can be found as solutions to the transcendental equation 
$$ 
\frac{\mu }{d+1}  {\mathcal J}^\prime_{\frac{|k|}{d+1}} \left(\frac{\mu }{d+1}  \right)
+ \left((1 \! + \! k^2 )^{\rho/2} - k   \right)   
{\mathcal J}_{\frac{|k|}{d+1}} \left(\frac{\mu }{d+1}  \right) = 0 
$$
induced by (\ref{eq.Bessel.unit}) with $g_k (\cdot,\mu)$ instead of $g$.

For any $k \in \mathbb Z$, fix a non-trivial solution $g^{(\nu)}_{k} 
(r)$ of problem (\ref{eq.Bessel}) corresponding to an eigenvalue $\mu^{(\nu)}_{k}$.
This system is an orthogonal basis in the weighted Lebesgue space  $L^2_d (0,1)$ 
with the scalar product 
$$
h_{d} (g,f) = \int_0^1 r^{2d+1} g (r) f (r) \ dr, 
$$
see \cite[Suppl.~II, Introduction and P.~1, \S~2]{TikhSamaX}.
 Then the function
$$
   u^{(\nu)}_{k} (z)= g^{(\nu)}_{k} (r) e^{\sqrt{-1}k \phi} 
$$
satisfies
\begin{equation}
\label{eq.Helmholtz.1}
\left\{
\begin{array}{rclcl}
   \left( - \Delta_2 + (\lambda^{(\nu)}_{k})^2 |z|^{2d} \right) u^{(\nu)}_{k} (z)
 & =
 & 0
 & \mbox{in}
 & \C,
\\
   \left( \bar{\partial}_\nu +(1 - \frac{\partial^2}{\partial ^2 \phi})^{\rho/2} \right) u^{(k)}_{\nu} (z)
 & = & 0 & \mbox{at} & \partial {\mathbb B}.
\end{array}
\right.
\end{equation}
where $(\lambda_k^{(\nu)})^2 = - (\mu_k^{(\nu)})^2 $
Indeed, by 
           (\ref{eq.Laplace.spher}),
           (\ref{eq.Bessel})
and the discussion above we conclude that this equality holds in
   ${\mathbb C}\setminus \{ 0 \}$.
We now use the fact that $u^{(k)}_{\nu}$ is bounded at the origin to see that the differential
equation of (\ref{eq.Helmholtz.1}) holds in all of $\mathbb C$. On the other hand, the 
boundary condition (\ref{eq.Helmholtz.1}) follows from (\ref{eq.Bessel.SL.0}) 
immediately, as already mentioned. 
Now, by the construction,  the system
$
   \{ u^{(k)}_{\nu} \}_{k \in {\mathbb Z}, \nu \in {\mathbb N}} 
$
consists of eigenfunctions of the family $L (\lambda) = L_0 + \lambda^2 C$  
in the case of the unit ball $\mathbb{B}$ around the origin in $\mathbb C$. Obviously, 
it coincides with the system of all eigenvectors constructed in Theorem 
\ref{t.PE.self.complete} if it is complete in the space $L^2 _h (\mathbb{B})$ with the 
scalar product 
$$
h (u,v) = \int_D |z|^{2d} u(z) \overline v (z) dx .
$$
But  $\{h_k\}_{k \in {\mathbb Z}}$ is an orthogonal basis 
in $L^2 (\partial {\mathbb B})$ and $\{ g^{(\nu)}_{k} \}_{k \in {\mathbb Z}_+, \nu \in 
{\mathbb N}}$  is an orthogonal basis in  $L^2_d (0,1)$ and hence  Fubini Theorem implies 
that the system is orthogonal basis in the space $L^2_h ({\mathbb B})$.

\bigskip

{\sc Acknowledgements\,}
The work was supported by the grant of the Russian Federation Government
 for scientific research under the supervision of leading scientist
 at the Siberian federal university, contract N. 14.Y26.31.0006,
 and by RFBR grant 14-01-00544.

\end{document}